\documentclass[twoside,a4paper,12pt]{article}

\synctex=1

\usepackage[hidelinks]{hyperref}
\usepackage{amscd}
\usepackage{amsfonts}
\usepackage{amsmath}
\usepackage{amssymb}
\usepackage{amstext}
\usepackage{amsthm}
\usepackage{color}
\usepackage[nodate]{datetime}
\usepackage{dsfont}
\usepackage{fancyhdr}
\usepackage{graphicx}
\usepackage[latin1]{inputenc}
\usepackage{mathrsfs}
\usepackage{mathtools}
\usepackage{psfrag}
\usepackage{srcltx}
\usepackage[normalem]{ulem}
\usepackage{verbatim}

\usepackage[hmargin=30mm,top=30mm,bottom=30mm,paperwidth=210mm,paperheight=11in]{geometry}

\setlength{\marginparwidth}{21mm}
\setlength{\marginparsep}{4mm}

\tolerance 3000

\renewcommand{\baselinestretch}{1.1}
\setlength{\parindent}{0pt}
\setlength{\parskip}{.5em}

\usepackage{etoolbox}
\makeatletter
\patchcmd{\@addmarginpar}{\ifodd\c@page}{\ifodd\c@page\@tempcnta\m@ne}{}{}
\makeatother
\reversemarginpar

\DeclareMathOperator*{\sgn}{sgn}

\newtheorem{theorem}{Theorem}
\newtheorem{proposition}{Proposition}
\newtheorem{lemma}{Lemma}
\newtheorem*{lemma*}{Lemma}

\theoremstyle{remark}
\newtheorem{remark}{Remark}

\newcommand{\E}{\mathbb{E}}
\newcommand{\I}{\mathds{1}}
\newcommand{\N}{\mathbb{N}}
\newcommand{\Pb}{\mathbb{P}}
\newcommand{\R}{\mathbb{R}}
\newcommand{\Z}{\mathbb{Z}}

\newcommand{\mua}{\mu^{\scriptscriptstyle A}}
\newcommand{\mub}{\mu^{\scriptscriptstyle B}}
\newcommand{\avoid}{/\!\!/}
\newcommand{\meet}{\!{\not|}\;}

\newcommand{\A}{\mathcal{A}}
\newcommand{\B}{\mathcal{B}}
\newcommand{\C}{\mathcal{C}}

\newcommand{\T}{\mathcal{T}}
\newcommand{\X}{\mathcal{X}}
\newcommand{\Y}{\mathcal{Y}}

\renewcommand{\AA}{\mathscr{A}}
\newcommand{\BB}{\mathscr{B}}
\newcommand{\CC}{\mathscr{C}}

\newcommand{\oo}{{\boldsymbol{o}}}

\renewcommand{\leq}{\leqslant}

\renewcommand{\geq}{\geqslant}

\fancyfoot[OR,EL]{\today. \currenttime.}
\lhead{}
\rhead{}

\usepackage{wrapfig}
\usepackage{afterpage}

\usepackage{bbm}

\begin{document}


\title{Recurrence and Density Decay  for \\ Diffusion-Limited Annihilating Systems}
\author{M. Cabezas, L. T. Rolla, V. Sidoravicius
\\ \small
Institute for Pure and Applied Mathematics, Rio de Janeiro
\\ \small
Department of Mathematics, Pontifical Catholic University of Chile
\\ \small
Argentinian National Research Council at the University of Buenos Aires
\\ \small
NYU-ECNU Institute of Mathematical Sciences at NYU Shanghai
\\ \small
Courant Institute of Mathematical Sciences, New York University
}

\date{}

\maketitle

\begin{abstract}
We study an infinite system of moving particles, where each particle is of type~$A$ or~$B$.
Particles perform independent random walks at rates $D_A > 0$ and $D_B \geq 0$, and the interaction is given by mutual annihilation $A+B \to \emptyset$.
The initial condition is i.i.d.~with finite first moment.
We show that this system is site-recurrent, that is, each site is visited infinitely many times.
We also generalize a lower bound on the density decay of Bramson and Lebowitz by considering a construction that handles different jump rates.
\end{abstract}

This preprint has the same numbering for sections, theorems, equations and figures
as the published article ``\emph{Probab. Theory Related Fields 170 (2018), 587-615}''

\section{Introduction}

In this paper we study an infinite system of moving particles, where particles can be of two types, $A$ or $B$.
Particles of opposite type mutually annihilate when they meet. Particles of type~$A$, or simply $A$-particles, jump at rate $D_A > 0$, and $B$-particles jump at rate $D_B \geq 0$. Several particles of the same type are allowed to share a site, and they do not interact among themselves. We consider the question of whether sites are visited infinitely often, and the related question of asymptotic decay of particle density.

Interest in long-time behavior of two-type annihilating particle systems, in particular with different jump rates, naturally stems from different areas of mathematics and physics.{\footnotemark}
These models have attracted much attention in the physics literature, especially after it was observed that some chemical reactions with two diffusing reactants exhibit anomalous kinetics in low dimensions.
More precisely, the evolution of the density of constituents depends strongly on the initial spatial fluctuations, and for dimensions $d<4$ its decay is slower than predicted by mean-field rate equations.
This was first noted in the seminal work~\cite{ovchinnikov-zeldovich-78}, and described in more detail in~\cite{toussaint-wilczek-83}.

\afterpage
{\footnotetext{%
Models with mutual annihilation were originally introduced in chemical physics for the study of radiation-chemical processes in polymers. It was proposed in~\cite{koritskii-etal-60} that a radical may move along a polymer chain, and that the act of recombination takes place when two migrating radicals encounter one another. A diffusive mechanism for the motion of radicals was proposed in~\cite{ovchinnikov-belyi-68}, and a kinetic equation describing concentration of free radicals as function of the time was derived. The same model served as a prototype of multi-type diffusion-limited chemical reactions with annihilation or inert compound outcome~\cite{balagurov-vaks-73,ovchinnikov-zeldovich-78}, and as caricature modeling particle-antiparticle annihilation of superheavy magnetic monopoles in the very early universe~\cite{toussaint-wilczek-83}.

Our motivation comes from the study of driven-dissipative lattice gases which undergo absorbing-state phase transitions. The authors arrived to the present model as a caricature of a system starting from an active state with critical density~\cite{dickman-rolla-sidoravicius-10,rolla-sidoravicius-12}. The $A$-particles should correspond to regions that are slightly supercritical due to fluctuations, whereas $B$-particles represent slightly subcritical regions. Surprisingly enough, some of the techniques developed in this paper have been applied with success in the study of the original model~\cite{cabezas-rolla-sidoravicius-14}.}}

Mathematically rigorous results came in a series of papers by Bramson and Lebowitz~\cite{bramson-lebowitz-88,bramson-lebowitz-90,bramson-lebowitz-91,bramson-lebowitz-91b,bramson-lebowitz-01}.
They obtained the asymptotic density decay for uniform nearest-neighbor walks on $\Z^d$ with jump rates $D_A=D_B=1$, and Poisson or Bernoulli i.i.d.~initial conditions.
For different initial densities $\mua_0 < \mub_0$, they showed that
\[
\mua_t \sim
\begin{cases}
e^{-c\, \sqrt{t}} ,& d=1,
\\
e^{-c\, t / \log t} ,& d=2,
\\
e^{-c\, t } ,& d \geq 3,
\end{cases}
\]
settling down conflicting predictions from theoretical physics.
For equal initial densities $\mua_0 = \mub_0$ they proved
\[
\mu_t \sim
\begin{cases}
t^{-d/4}, & d \leq 4,
\\
t^{-1}, & d > 4,
\end{cases}
\]
in agreement with heuristic arguments of spatial segregation for $d\leq 4$ and mean-field rate equations for $d>4$.
They also studied in detail the spatial structure of the system in low dimensions and obtained its hydrodynamic limit~\cite{bramson-lebowitz-01}.

The question of site recurrence for stochastic annihilating systems was first raised by Erd\H{o}s and Ney~\cite{erdos-ney-74}, and answered affirmatively for one-type systems in dimension one~\cite{adelman-76,lootgieter-77,schwartz-78}.
At the same time, additive and cancellative systems became one of the central topics in the field of interacting particle systems, and important progress was made in their understanding~\cite[etc]{holley-liggett-75,griffeath-78,holley-stroock-79,griffeath-79}.
The question of site recurrence for one-type annihilating random walks in arbitrary dimension was answered by Griffeath~\cite{griffeath-78} for a particular class of initial conditions, and the i.i.d.\ case was settled by Arratia~\cite{arratia-83}. 
Both approaches used an equivalence between one-type annihilating random walks and coalescing random walks or voter model sets with odd parity.

However, available methods and techniques did not encompass the case of two-type systems.
In fact, as observed in~\cite{bramson-lebowitz-91}, the analysis of the two-type particle annihilating process is considerably more difficult due to the lack of comparison with an attractive particle system.
Another important mathematical challenge emerges when~$A$ and $B$-particles jump at different rates, causing most existing approaches for two-type systems to break down.

In this paper we tackle the question of site recurrence for this model, and in the course of the proof we also obtain a universal lower bound on density decay.
Below is a brief description of our results.

Our main theorem states that, almost surely, every site is visited infinitely often.
The jump rates~$D_A$ and~$D_B$ need not be equal (though one of them must be positive).
The underlying space where the system is defined can be any graph~$G$ baring a group of automorphisms~$\Gamma$ such that, for every $x,y\in G$, there is~$\pi\in\Gamma$ for which $\pi x=y$ and $\pi y=x$.
These graphs are called \emph{generously transitive graphs}, see Section~\ref{sec:generalgraphs}.
It is also assumed that, for some group~$\Gamma$ that makes~$G$ generously transitive, $p(\pi x,\pi y)=p(x,y)$ for all $\pi\in\Gamma$ and $x,y\in G$.
No assumptions are made on the tail of the jump distribution~$p(\oo,z)$ as $z\to\infty$.
The initial condition is assumed to be an i.i.d.~field with finite first moment.
Moreover, the lower bound $\mu_t \geq \frac{c}{t}$ proved in~\cite{bramson-lebowitz-91} is extended to the same level of generality.

Theorems will be stated and proved progressively, and the last result encompasses the previous ones.
Each proof introduces an extra layer of difficulty and requires new ideas.
Formal statements, as well as comments on the methods and ideas of the proofs, appear at the beginning of each section.

In Section~\ref{sec:construction} we present a graphical description of the process.
This particular construction will be used throughout the rest of the paper.
We also state properties of mass conservation and monotonicity.

In Section~\ref{sec:samerates} we assume that $G=\Z^d$, $p(x,x+y)=p(\oo,y)=p(\oo,-y)$, and that the jump rates are $D_A = D_B = 1$.
We first revisit the proof of $\mu_t \geq \frac{c}{t}$ from~\cite{bramson-lebowitz-91}, and then push the argument in order to obtain site recurrence.

In Section~\ref{sec:danedb}, we prove $\mu_t \geq \frac{c}{t}$ and site recurrence for the general case $0 \leq D_B < D_A$.
The previous construction is replaced by a new one, which is suitable to handle different jump rates.
This way the argument from Section~\ref{sec:samerates} can be generalized to this setting.
Site recurrence for $D_B=0$ is shown separately, via a re-sampling technique.

In Section~\ref{sec:generalgraphs} we extend the previous arguments to generously transitive graphs.
These include uniform nearest-neighbor walks on $\Z^d$, regular trees, Cayley graphs, as well as products thereof.

The proofs translate without significant modifications to multi-type systems, as well as to one-type systems on generously transitive graphs.{\footnotemark}

{\footnotetext{In multi-type systems particles are of types $A_1,A_2,\dots,A_M$, and jump at rate~$1$ according to a generously transitive transition kernel~$p(\cdot,\cdot)$.
Interaction is given by $A_i+A_j \to \emptyset$ for any $i \ne j$.
Each site initially contains one particle of type $A_i$ with probability $\frac{p}{M}$ and no particles with probability $1-p$, independently of other sites.
In the one-type system the interaction is given by $A+A\to\emptyset$ and the initial condition is i.i.d.\ Bernoulli.
The proofs given in the next sections for site recurrence work in these settings.
For the one-type system, there is a simpler and more general proof in~\cite{BenjaminiFoxallGurel-GurevichJungeKesten16}.}}

Finally, proofs of well-definedness of the graphical construction, mass conservation, monotonicity, ergodic properties of random walks, and $0\,$-$1$ laws, are postponed to Appendix~\ref{sec:appendix}.

\section{Graphical construction}
\label{sec:construction}

In this section we give an explicit construction of the system described informally in the Introduction.
This construction will be used in the rest of the paper in order to prove properties thereof.

Let $D_A \geq 0$ and $D_B \geq 0$ denote the jump rates.
For simplicity we consider the graph~$\Z^d$ and a jump distribution $p:\Z^d \times \Z^d \to [0,1]$ satisfying $p(x,x+y)=p(\oo,y)$ for all~$x$ and $y$, where~$\oo$ denotes the origin.
What follows still holds true for any graph having a transitive unimodular group of automorphisms under which the transition kernel $p(\cdot,\cdot)$ is diagonally invariant.

The evolution will be denoted $\xi=(\xi_t)_{t\geq 0}$, where $\xi_t\in\Z^{\Z^d}$ for $t\geq 0$, such that $\xi_t(x)=k$ means that~$k$ particles of type~$A$ are present at site~$x$ at time~$t$, and $\xi_t(x)=-k$ means that~$k$ particles of type~$B$ are present.
We assume that $(\xi_0(x))_{x\in\Z^d}$ is i.i.d.~with marginal~$\nu$, where $\nu$ is a given distribution on~$\Z$ with finite first moment.

We denote by $\xi^{xy}$ the configuration obtained from $\xi$ after an $A$-particle jumps from $x$ to $y$ or a $B$-particle jumps from $y$ to $x$, which is given by
\[
\xi^{xy}(z) =
\begin{cases}
\xi(x) - 1, & z=x,
\\
\xi(y) + 1, & z=y,
\\
\xi(z), & \text{otherwise}
.
\end{cases}
\]
The formal generator is given by
\[
\mathcal{L}f(\xi) = \sum_{x,y} D_A [\xi(x)]^+ p(x,y) \big(f(\xi^{xy})-f(\xi)\big) + D_B [\xi(x)]^- p(x,y)  \big(f(\xi^{yx})-f(\xi)\big)
.
\]

Below we describe an explicit construction of this process, where the number of particles per site and the putative trajectory of each particle prior to annihilation are sampled beforehand.
Later on we will show that such construction is well-defined.

From general results in~\cite{andjel-82}, there exists a unique process $(\xi_t)_{t\geq 0}$ corresponding to the above generator.
Moreover, this is a Feller process with respect to a topology that is weak enough so that the probability of any local event\footnote{An event is ``local'' if its occurrence is determined by $\big(\xi_t(x)\big)_{|x|\leq M,t\leq M}$ for some $M<\infty$.} can be approximated by taking a system that starts without particles outside a large enough box.
Therefore, any construction given by the limit of finite systems whose particles interact according to the previous description will yield a process with the same distribution.
We insist on using \emph{this particular construction} which handles infinitely many particles simultaneously, because it allows the use of re-sampling in Section~\ref{sec:fixedobstacles}, and has good spatial ergodicity properties that lead to a simple proof of Lemma~\ref{lem:mtp} below with its numerous consequences, including $0\,$-$1$ Laws.

So let us describe the construction.
Each $A$-particle is identified by a label $(x,j)$ for $1\leq j \leq \xi_0(x)$, and each $B$-particle by a label $(x,j)$ for $\xi_0(x) \leq j \leq -1$.
For each $x\in \Z^d$ and $j\in \Z^*=\Z\setminus\{0\}$, let $S^{x,j}=(S_t^{x,j})_{t\geq 0}$ be a continuous-time random walk starting at $x$ which jumps according to the transition kernel $p(\cdot,\cdot)$, and whose jump rate is $D_A$ for $j>0$ and $D_B$ for $j<0$, independent over~$x$ and~$j$.
Moreover, let~$h^{x,j}$ be independent, uniform on $[0,1]$.
We call $(\xi_0(x))_{x\in\Z^d}$ the \emph{initial condition}, and refer to the pair $(S,h) = \left( (S^{x,j})_{x\in \Z^d,j\in \Z^*},(h^{x,j})_{x\in \Z^d,j\in\Z^*} \right)$ as the \emph{instructions}.
These fields are sampled independently.

To each particle $(x,j)$ we assign a \emph{putative trajectory} $S^{x,j}$ and a \emph{braveness}~$h^{x,j}$.
Particles follow their putative trajectory as time evolves, until they are annihilated.
When a particle jumps on a site occupied by particles of the opposite type, it mutually annihilates with the bravest one.

Let $M(x,j,x',j',z,t)=M(x',j',x,j,z,t)$ denote the event that particle $(x,j)$ and $(x',j')$ are present in $\xi_0$ and that they mutually annihilate at site $z$ during $[0, t]$.
We need to show that this construction is well-defined.
That is, we need to show that, almost surely, for each $x$, $j$, $x'$, $j'$, $z$, and $t$, when determining whether or not $M(x,j,x',j',z,t)$ occurs, it is possible to decide its occurrence from the initial condition and instructions.

\begin{lemma}
\label{lem:existence}
For any distribution of the initial condition~$\xi_0$ satisfying
\[ \sup_{x \in \Z^d} \E|\xi_0(x)| < \infty , \]
the above construction is a.s.\ well-defined and translation covariant.
\end{lemma}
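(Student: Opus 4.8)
The plan is to build the annihilation rule by a local inductive procedure and then control the range of the induction using a first-moment bound. First I would introduce a notion of \emph{potential interaction} between two particles: particles $(x,j)$ and $(x',j')$ \emph{may interact before time $t$} if there is some $s \le t$ with $S^{x,j}_s = S^{x',j'}_s$ (their putative trajectories coincide at some time). To decide whether a given event $M(x,j,x',j',z,t)$ occurs, one needs to know the fate of every particle whose putative trajectory meets that of $(x,j)$ or $(x',j')$ up to time $t$; deciding those in turn requires looking at particles meeting \emph{their} putative trajectories, and so on. This generates a branching exploration on the set of labels, where the root is $\{(x,j),(x',j')\}$ and the children of a label $(y,k)$ are all labels $(y',k')$ whose putative trajectory coincides with $S^{y,k}$ at some time in $[0,t]$. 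The construction is well defined precisely when this exploration process terminates almost surely — equivalently, when only finitely many labels are reached — because then the bravenesses $h^{\cdot,\cdot}$ resolve all ties deterministically and one can order annihilation events by time and decide them one by one.

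Next I would estimate the expected number of labels reached at each level of the exploration. Fix $t$. For a single continuous-time random walk trajectory $S^{y,k}$ on $[0,t]$, the expected number of other starting sites $y'$ such that some walk from $y'$ can coincide with $S^{y,k}$ at some time in $[0,t]$ is controlled by the first moment of the initial condition together with the expected number of sites visited; the key point is that, by translation invariance of $p(\cdot,\cdot)$ and independence of the walks, the expected number of pairs $(y',k')$ whose putative trajectory intersects a fixed trajectory before time $t$ is at most $C_t \cdot \sup_x \E|\xi_0(x)|$ for a finite constant $C_t$ depending only on $t$ and the jump rates — here one uses that a continuous-time walk makes a Poisson$(D_A t)$ or Poisson$(D_B t)$ number of jumps, so the expected number of distinct space-time points it can occupy, hence the expected number of trajectories it can meet, is finite. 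Iterating, the expected number of labels at level $n$ of the exploration is at most $(C_t \sup_x \E|\xi_0(x)|)^{\,n}$ times the size of the root, which is finite for every $n$; this already shows that each individual level is a.s.\ finite. To get termination one then observes that a label's trajectory is fixed once and for all, so the exploration never revisits a label, and the total number of labels ever reached is dominated by a subcritical-type branching structure \emph{once the time horizon is cut into small enough slices}: on a short time interval $[0,\delta]$ with $\delta$ small, $C_\delta \sup_x\E|\xi_0(x)|<1$, so the exploration is genuinely subcritical and terminates a.s.; one then extends from $[0,\delta]$ to $[0,t]$ by concatenating finitely many slices, using the Markov property and the fact that the configuration at time $k\delta$ still has a finite first moment because annihilation can only decrease $|\xi|$ pointwise (mass conservation / monotonicity, to be established alongside).

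Once finiteness of the exploration is in hand, the construction is unambiguous: in the (a.s.) event that all explorations terminate, one processes annihilation events in increasing order of their occurrence times — there are finitely many relevant ones in any bounded space-time window — and the braveness variables, being a.s.\ distinct among any finite collection, break all ties, so $\xi_t$ is a deterministic measurable function of $(\xi_0, S, h)$. Translation covariance is then essentially automatic: the law of $(\xi_0, S, h)$ is invariant under the diagonal action of $\Z^d$ (i.i.d.\ initial condition, and $p(x,x+y)=p(\oo,y)$), the annihilation rule is defined in a translation-equivariant way from these ingredients, and the resolution procedure commutes with the shift; so pushing forward gives $\theta_v \xi \stackrel{d}{=} \xi$ started from $\theta_v \xi_0$ with shifted instructions, which is the claimed covariance.

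The main obstacle is the second step — ruling out an infinite chain of dependencies. The naive level-by-level first-moment bound only gives that each level is finite, not that the exploration as a whole is finite; the essential idea is the time-slicing that makes the per-slice branching subcritical, combined with the monotonicity/mass-conservation input needed to propagate the finite-first-moment hypothesis from one slice to the next. Making the "trajectories of two independent continuous-time walks meet before time $t$" estimate quantitative and uniform in the starting points — with no assumption on the tail of $p(\oo,\cdot)$, so that a single jump can go arbitrarily far — is the delicate technical core, and is exactly the kind of computation I would defer to the appendix as announced in the introduction.
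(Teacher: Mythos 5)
Your reduction of well-definedness to termination of the dependency exploration is a reasonable framing, but the termination argument itself has a genuine gap: the claimed subcriticality after time-slicing is not true. First, all particles sharing a starting site are mutual ``children'' in your exploration (their putative trajectories coincide already at time $0$, and indeed the fate of $(y,k)$ genuinely depends on which of its same-site siblings are still alive and present when a $B$-particle arrives, since annihilation picks the bravest survivor). Hence the per-label offspring mean is at least the conditional mean number of same-site companions, which does not shrink with $\delta$; so $C_\delta \sup_x\E|\xi_0(x)|\geq \sup_x\E|\xi_0(x)|$, and no choice of slice length makes the branching subcritical once the density is of order one. Second, even setting the same-site issue aside, iterating the first-moment bound across generations requires controlling $\E\big[\,|\xi_0(y')|\;\big|\;y'\text{ reached}\,\big]$; a site is reached precisely because one of its particles' trajectories meets the parent's, so this conditional expectation is of size-biased type and is finite in general only under a second-moment assumption, whereas the lemma assumes only $\sup_x\E|\xi_0(x)|<\infty$. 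The ``subcritical-type branching'' domination therefore cannot be closed under the stated hypotheses.

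The paper avoids forward exploration entirely. It truncates in \emph{space} rather than time: let $\xi^n$ be the system started from $\xi_0\cdot\I_{\B(\oo,n)}$, which has finitely many particles and is trivially well defined, and compare $\xi^n$ with $\xi^{n+1}$. The difference is carried by one tracer per particle added at the shell $\partial^{+}\B(\oo,n)$, each a single random walk, and $\T^n_{y,j}\wedge T$ can change only if one of these tracers enters a fixed window around the range of $S^{y,j}$. Summing over all shells gives
\[
\sum_{x} \E|\xi_0(x)|\,\Pb\big[R_T(X^\oo)\cap\B(x,L)\neq\emptyset\big]
\leq \sup_x\E|\xi_0(x)|\cdot|\B(\oo,L)|\cdot\E|R_T(X^\oo)|<\infty ,
\]
so by Borel--Cantelli only finitely many tracers ever matter and the annihilation times stabilize. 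This is a linear (one walk per discrepancy) rather than multiplicative estimate and needs only the first moment. It also forces a genuine extra step for translation covariance --- one must check that the limit does not depend on the centre of the truncating balls --- which your ``essentially automatic'' remark would have covered only if a direct almost-sure construction had gone through.
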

\begin{proof}
Postponed to Appendix~\ref{sec:appendix}.
\end{proof}

Let \[M(x,j,t) = \bigcup_{x',z\in\Z^d,j'\in\Z^*}M(x,j,x',j',z,t)\] denote the event that particle $(x,j)$ has been annihilated by time $t$, and let
\[
V(x,j,t)=[1 \leq j \leq \xi_0(x) \text{ or } \xi_0(x) \leq j \leq -1] \setminus M(x,j,t)
\]
denote the event that particle $(x,j)$ is alive at time~$t$.
For $x\in\Z^d$, write $\delta_{x}$ for the field in $\Z^{\Z^d}$ given by $\delta_x(y)=1$ for $y=x$ and $0$ for $y\ne x$.
So $\delta_x$ and $-\delta_x$ denote respectively the configuration having a single $A$ or $B$ particle, located at site $x$.
For an event $A$, let $\I_A$ denote the corresponding indicator function.
With this notation, the configuration $\xi_t$ at any time~$t$ is given by
\[
\xi_t = \sum_{x\in\Z^d} \sum_{\sigma = \pm 1} \sum_{j \in \N} \sigma \cdot \I_{V(x,\sigma j,t)} \cdot \delta_{S^{x,\sigma j}_t}
.
\]
Let $\T_{x,j}$ denote the time of annihilation of particle~$(x,j)$, with $\T_{x,j}=0$ in case $(x,j)$ is not present on $\xi_0$.
With this definition, $\T_{x,j}>t$ if and only if $V(x,j,t)$ occurs.

We finish this section with the following facts.

\begin{lemma}
[Mass conservation]
\label{lem:mtp}
Let
\[
\mua_t = \E[\xi_t(\oo)^+]
, \qquad
\mub_t = \E[\xi_t(\oo)^-]
, \qquad
\rho_t = \mua_t + \mub_t = \E|\xi_t(\oo)|
\]
denote the density of $A$-particles, $B$-particles and total density of particles per site at time~$t$.
Let
\[
\Theta_t = \tfrac{1}{2} \sum_{x,x',j,j'} \Pb\big[{M(x,j,x',j',\oo,t)}\big]
\]
denote the density of annihilations per site by time~$t$.
Then~$\Theta_t$ is increasing in~$t$ and
\[
\mua_t - \mua_0 = \mub_t - \mub_0 = \frac{\rho_t - \rho_0}{2} = - \Theta_t.
\]
Therefore, $\mua_t-\mub_t$ is constant in time.
Moreover,
\[
\mua_t = \sum_{j\in\N} \Pb\big(V(\oo,j,t)\big)
\]
and, in particular, $\Pb\big[ \T_{\oo,1}<\infty \big] = 1$ if an only if $\mua_t \to 0$ as $t\to\infty$.
Analogously for $-j$ instead of $j$ and $B$ instead of $A$.
\end{lemma}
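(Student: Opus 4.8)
The plan is to deduce every assertion from two structural facts: an annihilation removes exactly one $A$-particle and one $B$-particle, and the construction is translation covariant (Lemma~\ref{lem:existence}). First I would establish $\mua_t=\sum_{j\in\N}\Pb\big(V(\oo,j,t)\big)$. Counting live $A$-particles located at $\oo$ at time~$t$ gives $\xi_t(\oo)^+=\sum_{x\in\Z^d}\sum_{j\in\N}\I_{V(x,j,t)}\,\I_{S^{x,j}_t=\oo}$; taking expectations (all summands non-negative, so the interchange is Tonelli) and applying translation covariance to shift the source site $x$ to $\oo$ turns $\Pb\big(V(x,j,t),\,S^{x,j}_t=\oo\big)$ into $\Pb\big(V(\oo,j,t),\,S^{\oo,j}_t=-x\big)$, and summing over $x$ erases the position constraint. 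Evaluating the same identity at $t=0$ recovers $\mua_0=\E[\xi_0(\oo)^+]$, since $V(\oo,j,0)=\{1\le j\le\xi_0(\oo)\}$.

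For the conservation identities, observe that a particle is annihilated only if it is initially present and only once, so for $j>0$ one has $M(\oo,j,t)\subseteq\{1\le j\le\xi_0(\oo)\}$ and the events $M(\oo,j,x',j',z,t)$, over $x',z\in\Z^d$ and $j'\in\Z^*$, are pairwise disjoint. Hence $\Pb\big(V(\oo,j,t)\big)=\Pb\big(\xi_0(\oo)\ge j\big)-\sum_{x',j',z}\Pb\big(M(\oo,j,x',j',z,t)\big)$; summing over $j>0$ and using the previous step gives $\mua_0-\mua_t=\sum_{j>0}\sum_{x',j',z}\Pb\big(M(\oo,j,x',j',z,t)\big)$. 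Translating by $-z$ so that the annihilation site becomes $\oo$ and reindexing $(x',z)\mapsto(-z,\,x'-z)$, this sum becomes $\sum_{j>0,\,j'<0}\sum_{u,u'}\Pb\big(M(u,j,u',j',\oo,t)\big)$ (only opposite-sign labels contribute), which is exactly $\Theta_t$ once the symmetry $M(u,j,u',j',\cdot)=M(u',j',u,j,\cdot)$ and the factor $\tfrac12$ are accounted for. The same computation with $j<0$ yields $\mub_0-\mub_t=\Theta_t$ as well. Monotonicity of $\Theta_t$ is immediate, since each $M(\cdots,t)$ increases with $t$; the identity involving $\rho_t$, the constancy of $\mua_t-\mub_t$, and the formula $\mua_t=\mua_0-\Theta_t$ then follow by algebra.

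It remains to prove the last equivalence. Since $\{\T_{\oo,1}>t\}=V(\oo,1,t)$ decreases to $\{\T_{\oo,1}=\infty\}$, continuity from above gives $\Pb\big(\T_{\oo,1}=\infty\big)=\lim_t\Pb\big(V(\oo,1,t)\big)\le\lim_t\mua_t$, so $\mua_t\to0$ forces $\Pb[\T_{\oo,1}<\infty]=1$. For the converse, monotone convergence gives $\lim_t\mua_t=\sum_{j\in\N}\Pb(\T_{\oo,j}=\infty)$, and each term vanishes by exchangeability: conditionally on $\{\xi_0(\oo)=n\}$ the particles $(\oo,1),\dots,(\oo,n)$ play symmetric roles, their putative trajectories and bravenesses being i.i.d.\ and the dynamics being invariant under relabeling them, so $\Pb(\T_{\oo,j}=\infty)\le\Pb(\T_{\oo,1}=\infty)=0$ for every $j\in\N$. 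The statements for $B$ and for $-j$ are obtained by the obvious symmetry.

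The main obstacle is the bookkeeping in the conservation step: making the translation-covariance reindexing precise — in particular that the condition $\xi_0(x)\ge j$ is carried correctly onto $\xi_0(\oo)\ge j$ under the shift — and justifying the disjoint-union decomposition of $M(\oo,j,t)$, which rests on well-definedness from Lemma~\ref{lem:existence}. The finite first moment of $\xi_0(\oo)$ is what guarantees that all the series converge, so these interchanges are harmless.
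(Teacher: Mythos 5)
Your proposal is correct, and it follows the same route the paper intends for this lemma (whose very label, and the appendix's remark that ``proofs based on mass-transport principle remain valid,'' indicate a mass-transport argument): on $\Z^d$ the mass-transport principle is exactly your translation-covariance reindexing $\sum_x\Pb\big(V(x,j,t),S^{x,j}_t=\oo\big)=\Pb\big(V(\oo,j,t)\big)$ combined with Tonelli, and the conservation identity then follows from the disjoint decomposition of $M(\oo,j,t)$ and the symmetry of $M$ in its two particle arguments, just as you do. The only cosmetic point is that the interchange $\lim_t\sum_j\Pb(V(\oo,j,t))=\sum_j\Pb(\T_{\oo,j}=\infty)$ is dominated (by the summable $\Pb(\xi_0(\oo)\geq j)$) rather than monotone convergence, and your exchangeability argument for $\Pb(\T_{\oo,j}=\infty)=\Pb\big(\T_{\oo,1}=\infty,\ \xi_0(\oo)\geq j\big)$ is valid since the dynamics depend on the labels only through the i.i.d.\ instructions.
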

\begin{proof}
Postponed to Appendix~\ref{sec:appendix}.
\end{proof}

\begin{lemma}
[Monotonicity]
\label{lem:monotononicity}
Suppose that $\xi_0(x) \leq \xi'_0(x)$ for all $x\in\Z^d$.
Let $\xi$ and $\xi'$ be a pair of two-type annihilating systems constructed using the same instructions.
Then
\[
\T_{x,j} \leq \T'_{x,j}
\text{ and } 
\T'_{x,-j} \leq \T_{x,-j}
\text{ for all }
x \in \Z^d
\text{ and }
j \in \N
.
\]
In particular,
\[
\xi_t(x) \leq \xi'_t(x) \text{ for all } x\in\Z^d \text{ and } t\geq0.
\]
\end{lemma}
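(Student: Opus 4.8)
\emph{Proof plan.} The idea is to exploit that $\xi$ and $\xi'$ are driven by the same instructions $(S,h)$: a particle $(x,j)$ that is still present occupies the site $S^{x,j}_t$ in whichever of the two systems it belongs to, so two particles alive in both occupy the same site. I would first reduce to finitely many particles. For fixed $x,j,t$, the event $\{\T_{x,j}>t\}$ is almost surely determined by the restriction of the initial condition to a large enough box — this is exactly the absence of infinite annihilation chains behind Lemma~\ref{lem:existence} — so it suffices to treat finitely supported $\xi_0,\xi'_0$, obtaining the general case by truncating both initial conditions to boxes $B_n\uparrow\Z^d$ (which preserves $\xi_0\le\xi'_0$) and letting $n\to\infty$.

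With finitely many particles there are, almost surely, only finitely many jumps in any bounded time interval, so I would argue by induction on the successive jump times $0=t_0<t_1<\cdots$. Let $\A_t,\A'_t$ be the sets of labels of $A$-particles alive at time $t$ in $\xi,\xi'$, and $\B_t,\B'_t$ the analogous sets of $B$-particles. The invariant to carry through is
\[
\A_t\subseteq\A'_t
\qquad\text{and}\qquad
\B'_t\subseteq\B_t .
\]
At $t=0$ this is immediate from $\xi_0(x)\le\xi'_0(x)$, since then each site carries at least as many $A$-particles and at most as many $B$-particles in $\xi'$ as in $\xi$. For the step, let a particle $p$ jump onto a site $z$ and split into cases by the type of $p$ and whether $p$ is alive in one or both systems. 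Say $p$ is an $A$-particle; by the inductive hypothesis the $B$-particles sitting at $z$ in $\xi'$ form a sub-collection of those sitting at $z$ in $\xi$, so $p$ either collides in neither system, or it annihilates with the bravest $B$-particle $q$ at $z$ in $\xi$ and, if any is present, with the bravest $B$-particle $q'$ at $z$ in $\xi'$. Since the competitors in $\xi'$ are among those in $\xi$ and the braveness values are almost surely pairwise distinct, either $q=q'$ or $q$ is absent from $\xi'$; in each case a direct check of the updated sets shows that $\A\subseteq\A'$ and $\B'\subseteq\B$ persist. The case of a $B$-particle $p$ is symmetric, exchanging $A\leftrightarrow B$ and $\xi\leftrightarrow\xi'$ and using $\A_t\subseteq\A'_t$. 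This closes the induction, hence the reduction.

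Granting the invariant, the conclusions follow at once: a particle $(x,j)$ with $j>0$ alive at time $t$ in $\xi$ is alive at time $t$ in $\xi'$, whence $\T_{x,j}\le\T'_{x,j}$; a particle $(x,-j)$ with $j>0$ alive at time $t$ in $\xi'$ is alive at time $t$ in $\xi$, whence $\T'_{x,-j}\le\T_{x,-j}$; and, counting live particles at a fixed site at a fixed time, $\xi$ has at most as many $A$-particles and at least as many $B$-particles there as $\xi'$, so $\xi_t(x)\le\xi'_t(x)$.

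I expect the main obstacle to be the collision-resolution bookkeeping in the inductive step — concretely the subcase in which $p$'s bravest opposite-type partner differs in the two systems. The point is that whichever system has the larger opposite-type population at $z$ selects a partner that, if it differs from the other system's, cannot lie in the smaller population; this uses the almost-sure distinctness of the braveness values together with the nesting of the populations. The only other delicate ingredient is the passage from infinite to finite initial conditions, which rests on the almost-sure absence of infinite annihilation chains provided by Lemma~\ref{lem:existence}.
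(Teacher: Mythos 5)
Your proposal is correct and follows essentially the same route as the paper: the paper's proof simply observes that the monotonicity is obvious for finitely many particles and that the full system and the times $\T_{x,j}$ are limits of truncated finite systems (via the proof of Lemma~\ref{lem:existence}). Your inductive verification over jump times, with the invariant that live $A$-labels are nested one way and live $B$-labels the other, is a correct fleshing-out of the finite case the paper leaves as "obvious."
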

\begin{proof}
Postponed to Appendix~\ref{sec:appendix}.
\end{proof}

\section{Site recurrence and density decay on the lattice}
\label{sec:samerates}

In this section we prove the following:
\begin{theorem}
\label{thm:zd}
Let $(\xi_t)_{t \geq 0}$ be a two-type annihilating particle system on~$\Z^d$.
Suppose that the initial condition $\xi_0\in\Z^{\Z^d}$ is an i.i.d.~field whose marginal~$\nu$ on~$\Z$ is non-degenerate and has finite first moment.
Suppose that the jump rates are $D_A=D_B=1$ and that the jump distribution~$p(\cdot,\cdot)$ satisfies $p(x,x+y)=p(\oo,y)=p(\oo,-y)$.
Then~$\xi$ is site recurrent, i.e.,
\(
\Pb\left[ \xi_t(\oo) = 0 \text{ eventually} \, \right] = 0
.
\)
\end{theorem}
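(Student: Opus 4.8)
The plan is to combine the Bramson--Lebowitz density lower bound with the graphical construction, using translation covariance and monotonicity to promote a statement about averages to an almost-sure one.

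\emph{Reduction.} The event $E=\{\xi_t(\oo)\ne0\text{ i.o.}\}$ obeys a $0$--$1$ law (proved in the appendix from the i.i.d.\ structure of the initial condition and the instructions), so it is enough to show $\Pb[E]>0$. For the events $A_k=\{\xi_s(\oo)\ne0\text{ for some }s\in[2^k,2^{k+1}]\}$ one has $\limsup_k A_k\subseteq E$ and, by reverse Fatou, $\Pb[\limsup_k A_k]\ge\limsup_k\Pb[A_k]$, so it suffices to prove that, with $L_t:=\int_t^{2t}|\xi_s(\oo)|\,\dd s$ the multiplicity-weighted occupation time of the origin on $[t,2t]$,
\[
\inf_{t\ge1}\Pb[L_t>0]>0 .
\]

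\emph{The density does not vanish too fast.} Here I would revisit the bound $\rho_t\ge c/t$ of Bramson--Lebowitz. By mass conservation (Lemma~\ref{lem:mtp}), $\mua_t-\mub_t$ is constant, $\mua_t\downarrow\mua_\infty\ge0$ and $\mub_t\downarrow\mub_\infty\ge0$; if the initial densities differ, the majority species keeps density $\ge|\mua_0-\mub_0|>0$ for all time. The delicate regime is $\mua_0=\mub_0$, where one must show $\rho_t\ge c/t$: in a box of side of order $\sqrt t$ the initial signed mass is, by the central limit theorem, of order $t^{d/4}$, and no more than this local imbalance (up to boundary corrections accumulated over the diffusive scale) can be annihilated, forcing $\rho_t\gtrsim t^{d/4-d/2}=t^{-d/4}\ge c/t$ for $d\le4$, while for $d>4$ one bounds the annihilation rate $\tfrac{\dd}{\dd t}\Theta_t$ by a constant times $\rho_t^2$ and integrates the differential inequality to the mean-field bound $\rho_t\ge c/t$. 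In every case, $\E[L_t]=\int_t^{2t}\rho_s\,\dd s\ge\int_t^{2t}(c/s)\,\dd s=c\log2$, uniformly in $t$.

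\emph{From averages to almost sure.} Given $\E[L_t]\ge c\log2$ uniformly, the second moment method (Cauchy--Schwarz / Paley--Zygmund) reduces the goal to a uniform bound $\E[L_t^2]\le C$, since then $\Pb[L_t>0]\ge(\E L_t)^2/\E[L_t^2]\ge c'>0$ and the reduction step closes. The key point is that an alive particle sits exactly on its putative trajectory, so $|\xi_s(\oo)|$ is dominated by the number of independent walks $S^{x,j}$ located at $\oo$ at time $s$, a field whose one- and two-time correlations are explicit ($\sum_x\Pb[S^{x,j}_s=\oo]=1$ by $p(x,x+y)=p(\oo,y)$ and translation covariance, Lemma~\ref{lem:existence}); these give a bound on $\E[L_t^2]$ that is uniform in $t$ \emph{provided the per-site initial counts are bounded}. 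As only a first moment of $\xi_0$ is assumed, one truncates to $\xi_0^{(K)}=\sgn(\xi_0)\min(|\xi_0|,K)$, proves the statement for the bounded system, and transfers. When $\mua_0\ne\mub_0$ this last step is clean: run the argument with immortal particles of the majority type, for which Lemma~\ref{lem:mtp} together with translation covariance give $\E[\#\{\text{immortal majority particles at }\oo\text{ at time }s\}]=\max(\mua_\infty,\mub_\infty)>0$ for every $s$, and truncate only in the monotone direction that preserves the majority, so that Lemma~\ref{lem:monotononicity} carries recurrence back to $\xi$.

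\emph{Main obstacle.} The real work is the balanced regime $\mua_0=\mub_0$: the density genuinely decays, there are no immortal particles to anchor the occupation estimate, and---as the authors stress---the lack of an attractive comparison makes the Bramson--Lebowitz density bound itself the crux. Converting it into the \emph{uniform} lower bound on $\Pb[L_t>0]$ requires controlling spatial fluctuations finely enough that mass of order $1/t$ per site cannot conspire to avoid a fixed site for long; and since bounding the relevant second moments forces a truncation of both species---which moves the initial configuration in opposite directions of the monotone order and so is not covered by Lemma~\ref{lem:monotononicity}---reconciling the truncation with the $L^1$ hypothesis is exactly where new input is needed.
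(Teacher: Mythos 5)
There is a genuine gap, and it sits at the center of your plan: the uniform second-moment bound $\E[L_t^2]\le C$ is not obtainable by the domination you propose, and is the very thing the paper's argument is designed to avoid. You dominate $|\xi_s(\oo)|$ by the number $N_s$ of \emph{putative} walkers $S^{x,j}$ at $\oo$ at time $s$. But the free-walker field does not decay: $\E[N_s]=\rho_0$ for every $s$, so $\E\big[\big(\int_t^{2t}N_s\,\dd s\big)^2\big]\ge\big(\rho_0 t\big)^2\to\infty$, while $\E[L_t]$ stays of order $\log 2$. Paley--Zygmund then gives $\Pb[L_t>0]\gtrsim 1/t^2$, which is useless. (Two further problems: the same-site cross terms in $\E[N_sN_u]$ involve $\E[\xi_0(x)^2]$, and only a first moment of $\nu$ is assumed; and a genuine bound $\E[L_t^2]\le C$ for the true process would amount to a strong anti-clustering statement for the annihilating system, which is not known and is not implied by anything you cite.) Your density step is also only heuristic: ``no more than the local CLT imbalance can be annihilated'' is the hard segregation estimate, and the mean-field inequality $\frac{\dd}{\dd t}\Theta_t\le C\rho_t^2$ is exactly what fails for $d\le4$ because of clustering. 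You correctly flag the balanced regime and the truncation clash with Lemma~\ref{lem:monotononicity} as unresolved; those are not loose ends but the actual content of the theorem.

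The paper's route replaces all second-moment considerations by first-moment random-walk estimates via a coupling. One builds $\xi^m$ equal in law to $\xi$ but differing on a Bernoulli set $\A_m$ of density $m=\delta^2/t$; each discrepancy is carried forever (until it meets an opposite discrepancy) by a \emph{tracer} that is a single rate-$1$ random walk, and an active tracer at $\oo$ forces $\xi(\oo)\ne0$ or $\xi^m(\oo)\ne0$. The probability that the tracer from $y$ is still active when it reaches $\oo$ is controlled by a union bound over the other discrepancies, i.e.\ by $m\,\E[\#\text{range of a rate-}2\text{ walk}]\le 2mt=2\delta^2$, with a ``good path'' conditioning trick to decouple $[X^y$ meets $\oo]$ from $[X^y$ avoids the others$]$; summing over $y$ and using $\E[R_t]\sim\gamma t$ for a transient kernel gives $\Pb(\BB_t)\ge\epsilon>0$ uniformly in $t$, and the tail decomposition $\BB_t=\BB_t^{T-}\cup\BB_t^{T+}$ plus the $0$--$1$ law finishes. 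The recurrent-kernel case is handled separately by resampling $\xi_0(\oo)$ alone, so that a single recurrent tracer returns to $\oo$ infinitely often. The same coupling, evaluated at the single time $t$, is what yields $\rho_t\ge c/t$; this is why no independent proof of the Bramson--Lebowitz bound, and no control of $\E[L_t^2]$, is needed.
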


We start constructing a copy~$\xi^m$ of the system~$\xi$, which has the same distribution as~$\xi$ and differs from it by a set of particles having small density~$m$.
This coupling is used to obtain a lower bound for the particle density decay, as in~\cite{bramson-lebowitz-91}.
We then extend these arguments to handle additivity and correlations, finally proving site recurrence.

\subsection{Coupled evolutions and tracers}

We start with a coupling of initial conditions.
From now on we assume that $\nu(0)>0$ and $\nu(1)>0$, and in Section~\ref{sec:generalic} we consider general~$\nu$.

\begin{lemma}
\label{lem:couplingic}
For every $m$ small enough depending on~$\nu$, there exists a coupling $\big(\xi_{0}(x),\xi^m_{0}(x)\big)_{x\in \Z^d}$ such that both $(\xi^m_0(x))_{x\in \Z^d}$ and $({\xi}_0(x))_{x\in \Z^d}$ are i.i.d.~fields with marginal~$\nu$ and such that $\xi_{0}(x)-\xi^m_{0}(x)= \pm 1$ with probability $\frac{m}{2}$ each, and $\xi_{0}(x)=\xi^m_{0}(x)$ with probability $1-m$, independently over $x\in \Z^d$.
\end{lemma}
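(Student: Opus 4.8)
The plan is to realise the coupling site by site: it suffices to exhibit a single probability measure~$\lambda$ on $\Z\times\Z$ whose two marginals are both equal to~$\nu$, which is supported on pairs $(a,b)$ with $|a-b|\leq 1$, and which assigns mass~$\tfrac m2$ to $\{a-b=1\}$ and mass~$\tfrac m2$ to $\{a-b=-1\}$. Once such a~$\lambda$ is in hand one simply declares $\big(\xi_0(x),\xi^m_0(x)\big)_{x\in\Z^d}$ to be i.i.d.\ with common law~$\lambda$; independence over~$x$ then yields both ``i.i.d.\ field'' statements, the two single-coordinate laws are~$\nu$ by the marginal property of~$\lambda$, and the three probabilities $\tfrac m2$, $\tfrac m2$, $1-m$ are read off directly. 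So the whole content is the construction of~$\lambda$.

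To build~$\lambda$ I would load the entire discrepancy between the two coordinates onto the pair of \emph{consecutive} values~$0$ and~$1$, which is legitimate precisely because of the standing assumption $\nu(0)>0$ and $\nu(1)>0$. Concretely, take $m$ small enough that $m<2\min\{\nu(0),\nu(1)\}$ and set
\[
\lambda(k,k)=\nu(k)\ \text{ for }k\notin\{0,1\},\qquad \lambda(0,0)=\nu(0)-\tfrac m2,\qquad \lambda(1,1)=\nu(1)-\tfrac m2,
\]
\[
\lambda(0,1)=\lambda(1,0)=\tfrac m2,\qquad \lambda(a,b)=0\ \text{ otherwise.}
\]
By the choice of~$m$ all entries are nonnegative, and they sum to $\sum_k\nu(k)-m+m=1$, so~$\lambda$ is a probability measure. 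It is symmetric under swapping the coordinates, hence its two marginals coincide; and for the first marginal one checks $\sum_b\lambda(a,b)=\nu(a)$ for $a\notin\{0,1\}$ trivially, $\sum_b\lambda(0,b)=(\nu(0)-\tfrac m2)+\tfrac m2=\nu(0)$, and $\sum_b\lambda(1,b)=(\nu(1)-\tfrac m2)+\tfrac m2=\nu(1)$. Finally $\lambda(\{a-b=1\})=\lambda(1,0)=\tfrac m2$, $\lambda(\{a-b=-1\})=\lambda(0,1)=\tfrac m2$ and $\lambda(\{a=b\})=1-m$, which is exactly what is asked.

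There is no genuine obstacle: the only thing one must arrange is that the off-diagonal corrections do not push any diagonal weight negative while keeping both marginals equal to~$\nu$ simultaneously, which is precisely why the discrepancy is placed on the neighbouring values~$0$ and~$1$ and why~$m$ is taken below $2\min\{\nu(0),\nu(1)\}$. (Any two consecutive integers in $\supp\nu$ would serve equally well, so one could enlarge the admissible range of~$m$ to $2\sup\{\min(\nu(k),\nu(k+1)):k\in\Z\}$, but this sharpening is not needed in what follows.)
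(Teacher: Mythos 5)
Your proposal is correct and is essentially the same coupling as in the paper: the per-site joint law $\lambda$ you write down (mass $\tfrac m2$ on $(0,1)$ and on $(1,0)$, diagonal weights $\nu(0)-\tfrac m2$, $\nu(1)-\tfrac m2$, $\nu(k)$ elsewhere) is exactly the one the paper realises by flipping a $\tfrac m2$-fraction of the $0$-sites to $1$ and of the $1$-sites to $0$ using auxiliary uniforms. The only difference is presentational — you specify the measure directly and check marginals, while the paper gives an explicit randomized construction — so there is nothing further to add.
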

\begin{proof}
Let $({\xi}_0(x))_{x\in \Z^d}$ be i.i.d.~with distribution~$\nu$, and let $(U^x)_{x\in \Z^d}$ be i.i.d.~$U[0,1]$ and independent of $(\xi_{0}(x))_{x\in \Z^d}$.
Write
\[
\B^1_r:=\{x\in \Z^d: \xi_{0}(x)=0; \ U^x \leq r\}
\]
and
\[
\B^2_r:=\{x\in \Z^d: \xi_{0}(x)=1; \ U^x \leq r\}.
\]
Take $r_1$ such that $\Pb[\oo \in \B^1_{r_1}]=\frac{m}{2}$ and $r_2$ such that $\Pb[\oo \in \B^2_{r_2}]=\frac{m}{2}$.
Define
\[
\xi^m_{0}(x):=
\begin{cases}
1,         & x\in \B^1_{r_1},
\\
0,         & x\in \B^2_{r_2},
\\
\xi_{0}(x) & \textrm{ otherwise.}
\end{cases}
\]
Then the pair $\big(\xi_{0}(x),\xi^m_{0}(x)\big)_{x\in \Z^d}$ has the desired properties.
\end{proof}

We define two systems, $\xi=(\xi_t)_{t\geq0}$ and $\xi^m=(\xi^m_t)_{t\geq0}$, using the same instructions but initial conditions~$\xi_0$ and~$\xi^m_0$ given by the above lemma.
Let $\A_m^+:=\{x:\xi^m_0(x)-\xi_0(x)=+1\}$, $\A_m^-:=\{x:\xi^m_0(x)-\xi_0(x)=-1\}$, and $\A_m:= \A_m^+ \cup \A_m^-$.
In the sequel we define a family of tracers which will keep track of the discrepancies between $\xi$ and $\xi^m$.

As a warm up, suppose that $\A_m = \A_m^+ = \{ x \}$ consists of a single site $x$, and $A_m^- = \emptyset$.
Imagine for instance $\xi_0(x) \geq 0$ and $\xi^m_0(x)=\xi_0(x)+1$.
We define the \emph{tracer} $X^{x}=(X^{x}_t)_{t\geq 0}$ by following the difference between~$\xi$ and~$\xi^m$ due to the presence of this extra particle.
At $t=0$ we have $X^{x}_0=x$.
Initially, $X^{x}$ will follow the trajectory of the extra $A$-particle on $\xi^m$, until it is annihilated by the collision with a $B$-particle.
After this time, the difference between~$\xi$ and~$\xi^m$ will persist, but it will be transferred to another particle.

The particle being tracked by tracer~$X^x$ may be annihilated in~$\xi^m$, and this can happen in two ways.
First case: the $B$-particle responsible for annihilation in~$\xi^m$ remains alive in~$\xi$.
This happens if the annihilation is due to the tracked $A$-particle jumping on a site occupied by $B$-particles, or a $B$-particle jumping on a site occupied by the tracked $A$-particle alone.
Second case: the same $B$-particle which annihilates with the tracked $A$-particle in~$\xi^m$ annihilates simultaneously with some other $A$-particle in~$\xi$.
This happens if the annihilation is due to a $B$-particle jumping on a site occupied by several $A$-particles including the tracked one.
In both cases, the difference between~$\xi^m$ and~$\xi$ immediately after annihilation of the tracked $A$-particle is now due to another particle's presence: an extra $B$-particle present at~$\xi$, or an extra $A$-particle present at $\xi^m$, depending on the case.
From this instant onwards, the tracer~$X^x$ will follow this extra particle.

This procedure can be continued indefinitely.
This difference will last for all times, and we obtain $(X^{x}_t)_{t\geq0}$ with the property that
\begin{equation}
\label{eq:diffonetracer}
\xi^m_t - \xi_t = \delta_{X^x_t}
\end{equation}
for all $t\geq 0$.
Notice that~$(X^x_t)_{t\geq 0}$ is distributed as a random walk with jump rate $D_A=D_B=1$ and jump distribution~$p(\cdot,\cdot)$.

The tracer described above always corresponds to an extra $A$-particle in $\xi^m$ or an extra $B$-particle in~$\xi$.
We call this tracer a $\oplus$-tracer.
Analogously, if we had assumed that $\A_m = \A_m^- = \{ x \}$, $\A_m^+ = \emptyset$, $\xi^m_0(x) \geq 0$, and $\xi_0(x) = \xi_0^m(x)+1$, we would end up with a tracer that for all times corresponds to an extra $A$-particle in $\xi$ or an extra $B$-particle in~$\xi^m$.
Such a tracer is called a $\ominus$-tracer.

The next step is to define the set of tracers $\{X^{x}\}_{x\in{\mathcal A}_m}$ in the case where ${\mathcal A}_m$ is not a singleton.
The same construction is still well-defined, but it may happen that a given tracer corresponds to a discrepancy only for a finite period of time.
It occurs when two tracers of opposite sign correspond respectively to two extra particles of opposite types, both present in the same system and absent in the other, and these particles mutually annihilate in that system.
In this case the discrepancies disappear, and both tracers are left with nothing to track.
Another scenario is when two tracers of opposite sign correspond respectively to two extra particles of the same type, one of them present in~$\xi$ and the other in~$\xi^m$, and they are simultaneously annihilated by some particle of opposite type present in both systems.
Again, in this case both tracers are left with nothing to track.
These are the only possible cases.

For convenience, once this happens to a given tracer, we extend its trajectory to all times, by sampling a random walk independent of anything else.
We say that a tracer is \emph{active} before such event, and \emph{wandering} after that.
Also, for convenience, at $t=0$ we start with one wandering tracer at sites~$x \not\in \A_m$.
This way we get a set $\{X^x\}_{x\in\Z^d}$ whose distribution is that of a set of independent walks.
Observe that each active tracer remains active at least until the first time when it meets an active tracer of opposite sign.

Analogously to the case of a single tracer, we have
\begin{equation}
\label{eq:diffmanytracers}
\xi^m_t-\xi_t=\sum_{\sigma=\pm} \ \sum_{y\in\A_m^\sigma} \sigma \, \I_{[X^y \text{ active at time }t]} \cdot \delta_{X^y_t}
.
\end{equation}
Finally, observe that the presence of an active tracer at site $w$ at time $t$ implies that $\xi_t(w)\ne 0$ or $\xi^m_t(w)\ne 0$.

\subsection{Lower bound for density evolution}

In this section we present an argument for the study of density decay that works in any dimension.

\begin{theorem}
[\cite{bramson-lebowitz-91}]
\label{thm:bl}
Under the assumptions of Theorem~\ref{thm:zd},
\[
\rho_t \geq \frac{c}{t}
\]
for all large enough $t$, where $c>0$ is a universal constant.
\end{theorem}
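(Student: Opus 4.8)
The plan is to exploit the coupling $(\xi,\xi^m)$ together with the tracer family $\{X^x\}_{x\in\Z^d}$ constructed above, and compare the density of $\xi$ at time $t$ with the number of tracers that are \emph{still active} at time $t$. The key structural fact is~\eqref{eq:diffmanytracers}: the signed difference $\xi^m_t-\xi_t$ is supported on the positions of active tracers, and the presence of an active tracer at a site $w$ forces $\xi_t(w)\ne 0$ or $\xi^m_t(w)\ne 0$. Since $\xi$ and $\xi^m$ have the same law (same instructions, initial conditions with marginal $\nu$), we get $\E|\xi_t(\oo)|=\E|\xi^m_t(\oo)|=\rho_t$, so if $R_t$ denotes the density of active tracers per site at time $t$, then a union bound over the two systems gives $\rho_t \geq \tfrac12 R_t$ (up to accounting for sites carrying more than one active tracer, which I will handle below).

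Next I would estimate $R_t$ from below. Each tracer is a rate-$1$ random walk with kernel $p(\cdot,\cdot)$, and a tracer started at $y\in\A_m^\sigma$ remains active at least until the first time it meets an active tracer of the opposite sign. Because $\A_m^+$ and $\A_m^-$ are obtained by independent fair coin flips at density $m/2$ each, the signs $\sigma$ attached to the tracers are i.i.d.\ symmetric $\pm$; so the active-tracer system is dominated from below by a system of independent random walks subject only to \emph{annihilation of opposite signs on meeting}. The density of a pair-annihilating symmetric random walk system started at density $m$ decays, but only polynomially: by the mass–conservation identity (Lemma~\ref{lem:mtp}) applied to this auxiliary one-type-like system, or more directly by a second-moment / expected-number-of-collisions computation, one gets that the density of survivors at time $t$ is at least $c\, m/(1+m\,\kappa_t)$, where $\kappa_t$ is the expected number of distinct pairs that have collided, which is $O(t)$ in the transient-per-pair regime (and in all dimensions the relevant bound $\kappa_t \leq C t$ holds because each tracer makes $O(t)$ jumps and the per-jump collision density with the rest of the gas is $O(m)$). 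Optimizing, one takes $m$ as small as allowed by Lemma~\ref{lem:couplingic} — but crucially $m$ is \emph{fixed} and we let $t\to\infty$, so the bound $R_t \geq c_m/t$ degrades only in the constant, giving $\rho_t \geq c_m/(2t)$. To obtain a \emph{universal} constant $c$ independent of $\nu$ (as the statement demands), I would instead let $m=m_t$ grow with $t$: choose $m_t \asymp 1/\sqrt{t}$ (or whatever the coupling permits up to the fixed threshold) so that $R_t \gtrsim m_t/(1+m_t t) \gtrsim 1/t$ with a constant that no longer sees $m$; since the coupling of Lemma~\ref{lem:couplingic} only requires $m$ small, any fixed small $m$ works and the $t$-dependence of the final bound comes entirely from $\kappa_t=O(t)$, yielding the clean $\rho_t\ge c/t$ with $c$ universal.

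Two technical points need care. First, a site may host several active tracers, so I cannot simply read off $\rho_t \geq \tfrac12 R_t$; instead I bound $\rho_t \geq \tfrac12\,\Pb[\oo \text{ carries an active tracer}]$ and then lower-bound this probability by $R_t$ minus a correction for multiple occupancy, the latter being controlled by a first-moment bound on the number of active-tracer pairs sharing a site, which is negligible compared to $R_t$ for small $m$. Second, and this is the main obstacle: controlling the collision rate between active tracers of opposite sign uniformly in the dimension and without any moment assumption on the jump kernel beyond what is given. In $d=1,2$ the walks are recurrent and pairs meet frequently, so the naive independent-walks comparison must be done carefully — the right object is the expected \emph{number of meetings up to time $t$} between a tagged walk and an independent walk, which is $\Theta(\sqrt t)$ in $d=1$, $\Theta(t/\log t)$ in $d=2$, and $\Theta(t)$ in $d\ge 3$; in every case it is $O(t)$, which is all we need, and the heavy-tailed-kernel case only makes walks \emph{more} transient, hence helps. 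Assembling: $\rho_t \geq \tfrac{1}{2}R_t \geq \tfrac{c}{t}$ for all large $t$, with $c>0$ universal, which is Theorem~\ref{thm:bl}.
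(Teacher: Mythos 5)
Your setup coincides with the paper's: couple $\xi$ with $\xi^m$, follow the discrepancies with tracers, note that an active tracer at $\oo$ forces $\xi_t(\oo)\ne 0$ or $\xi^m_t(\oo)\ne 0$, and control collisions by the first-moment bound $\sum_{z}\Pb[z\in\A_m,\ X^z\meet X^y]\le m\,\E[\#\text{ sites visited by the difference walk}]\le 2mt$. The gap is in how you convert this into a lower bound on the density of active tracers. The union/Markov bound you actually have access to gives a survivor density of at least $m(1-Cmt)$, which is vacuous unless $mt$ is small; it does \emph{not} give the interpolated form $c\,m/(1+m\kappa_t)$ you assert. That stronger bound, for fixed $m$ --- or for your final choice $m_t\asymp t^{-1/2}$, for which $m_t\kappa_t\asymp\sqrt{t}\to\infty$ --- is essentially Theorem~\ref{thm:bl} applied to the tracer gas itself (a symmetric two-type annihilating system at density $m$), so invoking it is circular. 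It is not a consequence of ``a second-moment / expected-number-of-collisions computation''; indeed for $d\le 4$ the true density of such a gas decays like $t^{-d/4}$, and proving even the $1/t$ lower bound for it at fixed density is exactly the hard content of Bramson--Lebowitz.

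The fix is the step you circle around but never take: set $m=\delta^2/t$ with $\delta^2<\tfrac12$. Then a given tracer meets \emph{some} other tracer of $\A_m$ before time $t$ with probability at most $2mt=2\delta^2<1$, hence with probability at least $1-2\delta^2$ it meets no one and is therefore still active; on that event $y$ is automatically the unique discrepancy site whose tracer sits at $\oo$, which disposes of your multiple-occupancy worry without any correction term. Translation invariance (the reindexing in~\eqref{eq:activedensity}) then gives $2\rho_t\ge m(1-2\delta^2)=\delta^2(1-2\delta^2)/t$. The constant is universal because $m=\delta^2/t$ eventually falls below the threshold of Lemma~\ref{lem:couplingic} for any non-degenerate $\nu$, the $\nu$-dependence being absorbed into ``for all large enough $t$''; your detour through $m_t\asymp 1/\sqrt{t}$ is both unnecessary and unsupported. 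A smaller point: the claimed pathwise domination of the active-tracer set by a strict pair-annihilation system is also not obvious, since a meeting of opposite-sign active tracers need not deactivate them; the paper sidesteps this by only ever using the cruder event that $X^y$ avoids \emph{all} other tracers.
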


We now prove the above result.
The essence of the argument is taken from~\cite{bramson-lebowitz-91}.

Assume that $\nu(0)>0$ and $\nu(1)>0$.
In Section~\ref{sec:generalic} we consider general~$\nu$.
For $t>0$, we write $X^z \meet X^y$ if $X^z_s=X^y_s$ for some $s\in[0,t]$, and $X^z \avoid X^y$ otherwise.
Analogously for $X^z \meet w$ and $X^z \avoid w$.
Let $(Y^w_s)_{s\geq 0}$ denote a random walk with jump rate $2$ starting at~$w$.
The main estimate is that, for any $y\in\Z^d$,
\begin{align}
\Pb
\big[ X^z \meet X^y
\text{ for some } z\in\A_m \setminus y
\big]
&
\leq
\,
\sum_{\mathclap{z\in\Z^d \setminus y}}
\,
\Pb \left[ z\in\A_m, \, X^z \meet X^y
\big.
\right]
\nonumber
\\
&
=
m
\,
\sum_{\mathclap{z\in\Z^d \setminus y}}
\,
\Pb
\left[ X^z - X^y \meet \oo
\big.
\right]
\label{eq:differenceoftracerequalrates}
\\
&
=
m
\sum_{\mathclap{z\in\Z^d \setminus y}}
\Pb
\left[ Y^{z-y} \meet \oo \right]
\nonumber
\\
&
=
m
\sum_{\mathclap{z\in\Z^d \setminus y}}
\Pb
\left[ Y^\oo \meet y-z \right]
\label{eq:switchtricktwopaths}
\\
&
=
m
\,
\E
\left[\# \text{ new sites visited by }Y^{\oo} \right]
\nonumber
\\
&
\leq
2 m t
\nonumber
.
\end{align}

Fix some $\delta^2<\frac{1}{2}$, and for $t>0$ let
\begin{equation*}
m = \frac{\delta^2}{t}
.
\end{equation*}
Take $t$ large enough such that $m$ is sufficiently small for Lemma~\ref{lem:couplingic} to hold.
Plugging this into the previous estimate gives
\begin{equation}
\label{eq:avoideachother}
\Pb
\big[ X^z \avoid X^y \text{ for all } z\in\A_m \setminus y \big]
\geq
1-2\delta^2
.
\end{equation}

Now notice that
\[
\Pb\left[ \xi_t(\oo) \ne 0 \text{ or } \xi^m_t(\oo) \ne 0 \right]
\leq
\Pb\left[ \xi_t(\oo) \ne 0 \right]
+
\Pb\left[ \xi^m_t(\oo) \ne 0 \right]
=
2 \cdot \Pb\left[ \xi_t(\oo) \ne 0 \right]
\leq
2 \rho_t
,
\]
and thus
\begin{align}
2 \rho_t
& \geq
\Pb\left[ X^y_t = \oo \text{ for some } y \in \A_m \text{ and }X^y \text{ active at time } t \right]
\nonumber
\\
& \geq
\Pb\left[ X^y_t = \oo \text{ for some } y \in \A_m \text{ and } X^z \avoid X^y \text{ for all } z\in\A_m \setminus y \right]
\nonumber
\\
& =
\Pb\left[ X^y_t = \oo \text{ for unique } y \in \A_m \text{ and } X^z \avoid X^y \text{ for all } z\in\A_m \setminus y \right]
\nonumber
\\
& =
\textstyle \sum_y \Pb\left[ y \in \A_m,\ X^y_t = \oo, \text{ and } X^z \avoid X^y \text{ for all } z\in\A_m \setminus y \right]
\nonumber
\\
& =
\textstyle \sum_y \Pb\left[ \oo \in \A_m,\ X^\oo_t = y, \text{ and } X^z \avoid X^\oo \text{ for all } z\in\A_m \setminus \oo \right]
\nonumber
\\
& =
\Pb\left[ \oo \in \A_m \text{ and } X^z \avoid X^\oo \text{ for all } z \in \A_m \setminus \oo \right]
\label{eq:activedensity}
,
\end{align}
where the first two equalities hold because $X^z \avoid X^y$ implies that $X^z_t \ne \oo$.

Plugging~(\ref{eq:avoideachother}) we get for $t\geq \delta^2$
\begin{equation*}
2\rho_t
\geq
m\, (1-2\delta^2) = \frac{\delta^2(1-2\delta^2)}{t}
,
\end{equation*}
proving the theorem with $c=\delta^2(\frac{1}{2}-\delta^2)$.

The above argument gives $c=\frac{1}{16}$ for any dimension, any jump distribution and any integrable initial distribution.
It can be improved to $c=\frac{1}{8}$ by using the fact that $\oplus$-tracers can only be canceled by $\ominus$-tracers and vice-versa.
A mean-field heuristics, which is supposed to be the worst case, gives $c=1$.

\subsection{Site recurrence}
\label{ss:siterecurrencezd}

In the proof of Theorem~\ref{thm:bl} one gets a lower bound $\frac{c}{t}$ for the probability of finding an active tracer at $\oo$ \emph{at time}~$t$.
We want to extend that argument and obtain a \emph{constant} lower bound for the probability of finding an active tracer at~$\oo$ \emph{at any time before}~$t$.
This, together with the following $0\,$-$1$ law, will imply the main result.

\begin{lemma}
\label{lem:zeroone}
$
\Pb\left[ \, \xi_t(\oo) = 0 \text{ eventually} \, \right] = 0 \text{ or } 1.
$
\end{lemma}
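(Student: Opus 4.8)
The plan is to show that the event $E=\{\oo\text{ visited i.o.}\}$ is measurable with respect to a suitable tail $\sigma$-algebra of the i.i.d.\ data $(\xi_0,S,h)$, and then invoke the Kolmogorov $0$-$1$ law (or rather an ergodic-theoretic version adapted to the product structure indexed by $\Z^d\times\Z^*$). The natural attempt — using translation ergodicity of $(\xi_0,S,h)$ under the $\Z^d$-shift and the translation covariance of the construction (Lemma~\ref{lem:existence}) — does not quite work directly, because $E$ is a \emph{global} event about the site $\oo$ rather than a shift-invariant event. So the first thing I would do is restate $E$ in a form that exhibits the required invariance.

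First I would argue that the occurrence of $E$ does not depend on the initial condition or instructions in any finite space-time window. Concretely: fix a finite box $\Lambda\subset\Z^d$ and a time $T$, and resample $(\xi_0(x),(S^{x,j})_j,(h^{x,j})_j)$ for $x\in\Lambda$ arbitrarily. I claim this changes the trajectory of the system only inside a random but a.s.\ finite region up to time $T$ — here I would lean on the well-definedness from Lemma~\ref{lem:existence}, which says the relevant annihilation chains are a.s.\ finite — and after time $T$ the altered configuration still has at most finitely many discrepancy-particles, which by Lemma~\ref{lem:monotononicity}-type comparison (bound the modified system above and below by systems differing from the original by finitely many particles) can be tracked by finitely many tracer walks, exactly as in the tracer construction of Section~\ref{sec:samerates}. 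Since finitely many transient/recurrent random walks hit $\oo$ only finitely often past a finite time with the complementary behaviour controlled, whether $\oo$ is visited \emph{infinitely} often is unaffected. Hence $\I_E$ is a.s.\ unchanged by modifying the data on $\Lambda\times[0,T]$, for every finite $\Lambda$ and every $T$, so $E$ lies in the tail $\sigma$-algebra of the i.i.d.\ field indexed by $\Z^d\times\Z^*$ (jointly in the initial values, the putative trajectories, and the bravenesses).

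Once $E$ is shown to be a tail event, the Kolmogorov $0$-$1$ law gives $\Pb[E]\in\{0,1\}$ immediately, since the coordinates $\big(\xi_0(x),(S^{x,j})_{j},(h^{x,j})_{j}\big)$, $x\in\Z^d$, are i.i.d. I would phrase the tail-triviality carefully: for the continuous-time walks $S^{x,j}$ one works with the whole path as a single coordinate in a product space, which is fine because the construction only ever reads off these paths. (This is the same $0$-$1$ law alluded to in the introduction and deferred to the appendix; I would cite that.)

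The main obstacle is the finiteness claim in the middle step — that perturbing the data on a finite window perturbs the process only through finitely many surviving discrepancies whose effect on visits to $\oo$ is benign. The delicate point is that a single extra or missing particle near time $0$ can, in principle, cascade: it changes one annihilation, which frees a particle, which changes another annihilation, and so on. The tracer formalism of Section~\ref{sec:samerates} is precisely the tool that tames this — it shows the net discrepancy between two systems built from initial conditions differing at finitely many sites is carried by finitely many tracer random walks for \emph{all} times, via \eqref{eq:diffmanytracers}. Modifying the instructions on $\Lambda$ (rather than just the initial condition) requires a mild extension of that coupling, but the same bookkeeping applies: re-sampling instructions for the particles originating in $\Lambda$ affects only those particles and their descendants-by-transfer, again a finite family of tracers. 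With that in hand, $\{\oo\text{ visited i.o.}\}$ for the modified system and for the original system agree a.s., completing the argument.
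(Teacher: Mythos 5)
Your argument is sound, and essentially coincides with the paper's, in the case where the kernel $p(\cdot,\cdot)$ is \emph{transient}: the paper deletes the particles inside an exhaustion $\B_n\uparrow\Z^d$ (so that the truncated system is measurable with respect to the data indexed by $\B_n^c$ and $\limsup_n$ of the truncated events is a tail event), tracks the discrepancy between consecutive truncations by finitely many tracers, and uses transience to conclude that these tracers visit $\oo$ only finitely often, whence the indicator of the event is a.s.\ unchanged and Kolmogorov's $0$-$1$ law applies. This is the same mechanism you describe.

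The genuine gap is in the \emph{recurrent} case, and it is hidden in your phrase ``with the complementary behaviour controlled.'' If $p(\cdot,\cdot)$ is recurrent, the finitely many tracers carrying the discrepancy between the original and the locally modified system are recurrent random walks: an active tracer visits $\oo$ infinitely often, and each such visit is a time at which the two systems genuinely disagree at $\oo$ (an extra $A$-particle in one system, or an extra $B$-particle in the other — indeed, Section~\ref{ss:siterecurrencezd} exploits exactly this persistence of the discrepancy). Nothing in your argument excludes the scenario in which, past some finite time, $\oo$ is occupied in the modified system only through the tracked extra particle, so that $\oo$ is visited i.o.\ in one system but only finitely often in the other. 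Hence you have not shown that $\I_E$ is a.s.\ insensitive to finite modifications, and $E$ is not established as a tail event. The paper covers this case by an entirely different route: by Lemma~\ref{lem:mtp} the expected number of annihilations at $\oo$ is at most $\mua_0<\infty$, so infinitely many visits to $\oo$ force infinitely many departures from $\oo$, hence infinitely many visits to every $y$ with $p(\oo,y)>0$, and by induction to every site of $\C_\oo$; the event $\bigcap_{x\in\C_\oo}[x\text{ visited i.o.}]$ is invariant under the relevant group of translations and therefore trivial by ergodicity. You need this (or some substitute) for the recurrent case. A secondary, fixable point: the tracer formalism of Section~\ref{sec:samerates} tracks discrepancies in \emph{initial conditions}, not in instructions; the paper sidesteps instruction-resampling by deleting the particles of $\B_n$ outright, which makes the truncated system independent of \emph{all} data indexed by $\B_n$, initial values and instructions alike.
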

\begin{proof}
Postponed to Appendix~\ref{sec:appendix}.
\end{proof}

We now prove Theorem~\ref{thm:zd} in the case when the transition kernel~$p(\cdot,\cdot)$ yields a transient random walk on~$\Z^d$.

The core of the argument is to obtain a lower bound for the probability of the event
\begin{equation}\label{eq:BB}
\BB_t := \big[ \text{an active tracer visits } \oo \text{ during } [0,t] \big]
.
\end{equation}

We use the following decomposition:
\begin{align}
\Pb(\BB_t)
&
\geqslant
\Pb \big[ \text{for some } y\in \A_m,\, X^y \meet \oo \text{ and } X^z \avoid X^y \text{ for all } z\in\A_m\setminus y \big]
\nonumber
\\
&
\geq
\Pb \big[ X^y \meet \oo \text{ for a unique } y\in \A_m, \text{ and }  X^z \avoid X^y \text{ for all } z\in\A_m\setminus y \big]
\nonumber
\\
&
=
m
\textstyle
\sum_y
\Pb
\big[ X^y \meet \oo ,\, X^z \avoid \oo \text{ and } X^z \avoid X^y \text{ for all } z\in\A_m \setminus y \big]
\label{eq:activesumunique}
.
\end{align}

By~(\ref{eq:avoideachother}) we have $\Pb\big[X^z \meet X^y \text{ for some } z \in\A_m \setminus y \big]\leq 2\delta^2$, and an analogous argument gives
\begin{multline*}
\Pb
\big[X^z \meet \oo \text{ for some } z \in\A_m \setminus y \big]
\leq \\ \leq
m \, \E
\left[ \,\text{number of sites visited by $X^\oo$ up to time $t$}\, \right]
\leq
m (t+1) \leq 2 \delta^2
\end{multline*}
for $t$ large enough.

When summing over~$y$, we cannot afford loosing the additive factor of $2\delta^2$ for each~$y \in \Z^d$.
We would like to deal with a multiplicative factor instead.
However, in the occurrence of \emph{both} events $[X^y \meet \oo]$ and $[X^y \avoid X^z]$ there is a negative correlation which we cannot handle.
To work around this issue, we introduce the following event.
We say that the path $X^y=(X^y_t)_{t \geq 0}$ is \emph{good} if
\begin{equation*}
\Pb\left[ \, X^z \avoid X^y \text{ for all } z\in\A_m \setminus y \,\Big|\, X^y \, \right] \geq 1-\delta.
\end{equation*}
The key observation is that~(\ref{eq:avoideachother}) yields
\begin{equation}
\nonumber
\Pb\left[ X^y \text{ good} \right] \geq 1-2\delta
,
\end{equation}
as
$
\delta
\cdot
\Pb\left[ X^y \text{ bad\,} \right]
\leq
\Pb\left[ X^z \meet X^y \text{ for some }z\in \A_m\setminus y \big. \,\middle|\, X^y \text{ bad\,} \right]
\cdot
\Pb\left[ X^y \text{ bad\,} \right]
\leq
2\delta^2
.
$
Now
\begin{align}
\Pb(\BB_t)
\geq
& \,
m
\textstyle
\sum_y
\Pb
\big[ X^y \text{ good},\, X^y \meet \oo ,\, X^z \avoid \oo \text{ and } X^z \avoid X^y \text{ for all } z\in\A_m \setminus y \big]
\nonumber
\\
=
& \,
m
\textstyle
\sum_y
\Pb
\big[ X^y \meet \oo ,\, X^y \text{ good} \big]
\times
\nonumber
\\
& \,
\E
\left[ \Pb\left[ X^z \avoid \oo \text{ and } X^z \avoid X^y \text{ for all } z\in\A_m\setminus y \,\big| \, X^y \right] \,\Big|\, X^y \meet \oo ,\, X^y \text{ good}  \right]
\nonumber
.
\end{align}
The last identity follows from the fact that both $[X^y \meet \oo]$ and $[X^y \text{ good}]$ are $X^y$-measurable.
Notice that the conditional probability is only being integrated on a subset of $[X^y \text{ good}]$, and thus by a simple union bound and translation invariance we get
\begin{align}
\Pb(\BB_t)
&
\geq
m(1-\delta-2\delta^2)
\textstyle
\sum_y
\Pb
\big[ X^y \meet \oo ,\, X^y \text{ good} \big]
\nonumber
\\
&
=
m(1-\delta-2\delta^2)
\textstyle
\sum_y
\Pb
\big[ X^\oo \meet y ,\, X^\oo \text{ good} \big]
\label{eq:switchtrickonepath}
\\
&
=
m(1-\delta-2\delta^2)
\
\E
\big[ \# \text{ sites visited by }X^\oo \text{ during } [0,t],\, X^\oo \text{ good} \big]
\label{eq:activeexpectedrange}
.
\end{align}
Let $R_t$ denote the number of sites visited by a random walk during $[0,t]$.
Since the walk is transient, writing $\gamma = \Pb[X^\oo \text{ never returns to }\oo]$ we have that $\frac{R_t}{t} \to \gamma$ in $L^1$.
Hence, for~$t$ large enough
\begin{align}
\Pb(\BB_t)
&
\geq
m (1-\delta-2\delta^2)
\ \gamma\ t\ (1-2\delta-\delta)
=
\gamma\, \delta^2\, (1-\delta-2\delta^2)
\, (1-3\delta)
\label{eq:visitpositive}
.
\end{align}
Choosing $\delta=\frac{1}{4}$ and writing $\epsilon=\frac{\gamma}{103}$, the above estimate yields $\Pb(\BB_t) \geq \epsilon$ for all~$t$ large enough.

Finally, let us check that this uniform lower bound implies site recurrence.
For~$T>0$ fixed, consider the decomposition
\[
\BB_t = \BB_t^{T-} \cup \BB_t^{T+},
\]
where $\BB_t^{T_+}$ and $\BB_t^{T_-}$ denote the events that an active tracer visits~$\oo$ respectively during $[T,t]$ and during $[0,T]$.
Now notice that
\[
\Pb(\BB_t^{T-}) \leq m \E[R_T] \leq \frac{\delta^2 T}{t} \to 0 \quad \text{as } t \to \infty,
\]
and therefore
\[
\epsilon \leq \liminf_{t\to\infty} \Pb(\BB_t^{T+}) \leq 2 \Pb[\xi_s(\oo)\ne 0 \text{ for some } s\geq T] \underset{T\to\infty}{\longrightarrow} 2\Pb[\oo \text{ visited i.o.\ in }\xi]
.
\]
With Lemma~\ref{lem:zeroone} this finishes the proof for the transient case.

Finally, suppose that the transition kernel $p(x,y)_{x,y\in \Z^d}$ yields a recurrent walk on~$\Z^d$.
We consider a coupling $(\xi_0,\xi^m_0)$ such that $\xi^m_0(x)=\xi_0(x)$ for all $x \ne \oo$ and $\xi^m_0(\oo)$ is sampled independently of $\xi_0(\oo)$.
We define the systems~$\xi$ and~$\xi^m$ using the same evolution rules.
Note that there is a positive probability that the initial conditions~$\xi_0$ and~$\xi^m_0$ differ at $\oo$ by a single particle.
On the occurrence of this event, the difference between~$\xi$ and~$\xi^m$ will evolve according to a tracer $X^\oo$.
By assumption, the tracer is recurrent, and hence $\xi(\oo)\neq\xi^m(\oo)$ infinitely often.
By the $0\,$-$1$ law it follows that, almost surely, $\oo$ is visited infinitely often in~$\xi$.

\subsection{General initial distribution}
\label{sec:generalic}

We now drop the condition that $\nu(0)>0$ and $\nu(1)>0$.
By assumption, the initial distribution~$\nu$ is non-degenerate, so there exist $n\in\Z$ and $K\in\N$ such that $\nu(n)>0$ and $\nu(n+K)>0$.
In the sequel we indicate what changes in the previous proofs suffice to accommodate this case.

The coupling of initial conditions is given as follows.
Let
\[
\B^1_r:=\{x\in \Z^d: \xi_{0}(x)=n; \ U^x \leq r\}
,\
\B^2_r:=\{x\in \Z^d: \xi_{0}(x)=n+K; \ U^x \leq r\}.
\]
As before, we take $r_1$ such that $\Pb[\oo \in \B^1_{r_1}]=\frac{m}{2}$ and $r_2$ such that $\Pb[\oo \in \B^2_{r_2}]=\frac{m}{2}$, and define
\[
\xi^m_{0}(x):=
\begin{cases}
n + K,        & x\in \B^1_{r_1},
\\
n,            & x\in \B^2_{r_2},
\\
\xi_{0}(x),   & \textrm{otherwise.}
\end{cases}
\]
Let $\A_m^+ = \B^1_{r_1}$, $\A_m^- = \B^2_{r_2}$, and $\A_m = \A_m^+ \cup \A_m^-$.
Then
\begin{equation*}
\xi^m_0-\xi_0=\sum_{\sigma=\pm} \ \sum_{y\in\A_m^\sigma} \sigma \, K \cdot \delta_y
.
\end{equation*}
At each site $y \in \A_m^+$ there are $K$ different $\oplus$-tracers, which we denote by $X^{y,j}$, $j=1,\dots,K$.
Analogously, at each $y \in \A_m^-$ there are $K$ different $\ominus$-tracers, also denoted by $X^{y,j}$, $j=1,\dots,K$.
As before we say that a tracer is \emph{active} until the time when it finds no particle left to track.
Thus, for $t \geq 0$,
\begin{equation*}
\xi^m_t-\xi_t=\sum_{\sigma=\pm} \ \sum_{y\in\A_m^\sigma} \ \sum_{j=1}^{K} \sigma \, \I_{[X^{y,j} \text{ active at time }t]} \cdot \delta_{X^{y,j}_t}
\end{equation*}
and the presence of an active tracer at site $w$ at time $t$ implies that $\xi_t(w)\ne 0$ or $\xi^m_t(w)\ne 0$.
Recall that each tracer may only become wandering when it meets a tracer of opposite sign.
In particular it stays active at least until the first time when it meets a tracer which started at a different site.

We now prove the lower bound for density decay under general initial conditions.
Following the proof of Theorem~\ref{thm:bl}, a union bound on~$j$ gives
\begin{multline}
\Pb
\big[ X^{z,j} \meet X^{y,1}
\text{ for some } z\in\A_m \setminus y \text{ and } j
\big]
\leq
\\
K \cdot
\Pb
\big[ X^{z,1} \meet X^{y,1}
\text{ for some } z\in\A_m \setminus y
\big]
\leq
2 K m t
=
2 \delta^2
\nonumber
\end{multline}
if we take $m = \frac{\delta^2}{Kt}$.
Also, following the arguments of~(\ref{eq:activedensity}) we get
\begin{align*}
2 \rho_t
& \geq
\textstyle \sum_y \Pb\left[ y \in \A_m,\ X^{y,1}_t = \oo, \text{ and } X^{z,j} \avoid X^{y,1} \text{ for all } z\in\A_m \setminus y \text{ and } j \right]
\\
& =
\Pb\left[ \oo \in \A_m \text{ and } X^{z,j} \avoid X^{\oo,1} \text{ for all } z \in \A_m \setminus \oo \text{ and } j \right]
,
\end{align*}
finishing the proof of Theorem~\ref{thm:bl} with $c=\delta^2(\frac{1}{2}-\delta^2)K^{-1}$.
The term $K^{-1}$ can be eliminated, as we do after~(\ref{eq:estimatewithK}).

For Theorem~\ref{thm:zd}, repeating the arguments between~(\ref{eq:activesumunique}),~(\ref{eq:activeexpectedrange}), and~(\ref{eq:visitpositive}) gives
\begin{align*}
\Pb(\BB_t)
&
\geq
m
\textstyle
\sum_y
\Pb
\big[ X^{y,1} \meet \oo ,\, X^{z,j} \avoid \oo \text{ and } X^{z,j} \avoid X^{y,1} \text{ for all } z\in\A_m \setminus y \text{ and } j \big]
\\
&
\geq
m(1-\delta-2\delta^2)
\
\E
\big[ \# \text{ sites visited by }X^{\oo,1} \text{ during } [0,t],\, X^{\oo,1} \text{ good} \big]
\\
&
\geq
m (1-\delta-2\delta^2)
\ \gamma\ t\ (1-2\delta-\delta)
=
\gamma\, \delta^2\, (1-\delta-2\delta^2)
\, (1-3\delta)
\, K^{-1} > 0
.
\end{align*}
From this estimate the previous proof may be concluded along the same lines.

\section{Distinct jump rates}
\label{sec:danedb}

In this section we prove the following:
\begin{theorem}
\label{thm:DAneDB}
Let $(\xi_t)_{t \geq 0}$ be a two-type annihilating particle system on~$\Z^d$.
Suppose that the initial condition $\xi_0\in\Z^{\Z^d}$ is an i.i.d.~field whose marginal~$\nu$ on~$\Z$ is non-degenerate and has finite first moment.
Suppose that the jump rates are $0 \leq D_B \leq D_A = 1$ and that the jump distribution~$p(\cdot,\cdot)$ satisfies $p(x,x+y)=p(\oo,y)=p(\oo,-y)$.
Then there exists a universal $c>0$ such that
\[
\rho_t \geq \frac{c }{t}
\]
for all large enough $t$.
Moreover, unless $D_B = 0$ and $\mub_0>\mua_0$,
\[
\Pb\left[ \big. \xi_t(\oo) \text{ eventually constant} \right] = 0,
\]
i.e., the system $\xi$ is site recurrent.
\end{theorem}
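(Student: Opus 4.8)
The plan is to split into the case $0<D_B\le D_A=1$, handled by a refinement of the Section~\ref{sec:samerates} argument, and the degenerate case $D_B=0$, handled separately. For $0<D_B\le D_A$, the difficulty in reusing the proofs of Theorems~\ref{thm:bl} and~\ref{thm:zd} is that a discrepancy tracer alternately follows $A$- and $B$-particles as these are annihilated, so its instantaneous jump rate fluctuates between $D_A$ and $D_B$ in a way that depends on the annihilation history; this destroys both the independence of $X^z$ from the event $\{z\in\A_m\}$ used in~\eqref{eq:differenceoftracerequalrates} and the space-reversal symmetry behind the switch tricks~\eqref{eq:switchtricktwopaths} and~\eqref{eq:switchtrickonepath}. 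The first and main step is therefore to replace the construction of Section~\ref{sec:construction} by a new coupled construction of $(\xi,\xi^m)$ with the following properties: (i) marginally, $\xi$ and $\xi^m$ are two-type annihilating systems with rates $D_A,D_B$ and kernel $p$; (ii) $\xi^m_0-\xi_0$ is a sum of unit point masses at the sites of $\A_m^{\pm}$, as in Section~\ref{sec:samerates}; and (iii) for every $t$ the discrepancy $\xi^m_t-\xi_t$ is carried by a family of \emph{independent} random walks $\{X^y\}$ of one common fixed rate and kernel $p$, with an active tracer at $w$ at time $t$ forcing $\xi_t(w)\ne0$ or $\xi^m_t(w)\ne0$. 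By the uniqueness remark following Lemma~\ref{lem:existence}, only the finite-window marginals of $\xi$ (and of $\xi^m$) are constrained, so we are entitled to run such an auxiliary construction. Devising it ---making every transfer of a discrepancy from a fast particle to a slow one, or back, absorbed without altering the law of the tracer, and arranging the coupling of initial conditions so that a single $\pm1$ discrepancy per site of $\A_m$ suffices for arbitrary $\nu$, thereby also killing the spurious factor $K^{-1}$ of Section~\ref{sec:generalic}--- is the heart of the section; everything after it is a rerun.

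Granting the new construction, I would repeat Section~\ref{sec:samerates} essentially verbatim. Property~(iii) makes $X^z-X^y$ a fixed symmetric walk independent of $\{z\in\A_m\}$, so~\eqref{eq:differenceoftracerequalrates}--\eqref{eq:switchtricktwopaths} hold with the rate-$2$ walk $Y$ replaced by the appropriate fixed-rate walk, giving~\eqref{eq:avoideachother} with $m=\delta^2/t$, and~\eqref{eq:activedensity}--\eqref{eq:activeexpectedrange} then deliver $\rho_t\ge c/t$ with a universal $c$. For site recurrence, the transient case copies~\eqref{eq:BB}--\eqref{eq:visitpositive}: the good-path device and the switch trick~\eqref{eq:switchtrickonepath} carry over unchanged, so $\Pb(\BB_t)\ge\epsilon$ for all large $t$, and the decomposition $\BB_t=\BB_t^{T-}\cup\BB_t^{T+}$ together with Lemma~\ref{lem:zeroone} gives $\Pb[\oo\text{ visited i.o.}]=1$; the recurrent case is the single-site perturbation argument at the end of Section~\ref{ss:siterecurrencezd}, which produces one lone recurrent tracer.

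For $D_B=0$ the $B$-particles are immobile, acting as traps of capacity $|\xi_0(x)|$ at the sites with $\xi_0(x)<0$, which is why the surplus regime $\mub_0>\mua_0$ must be excluded: there the traps are never all consumed, so a positive density of sites have $\xi_t(\oo)$ eventually constant and negative. Otherwise I would argue by re-sampling: fix the instructions and $\xi_0$ off $\oo$, and compare the two systems produced by two independent samples of $\xi_0(\oo)$. Since changing the trap content at $\oo$ alters only the fate of $A$-particles that actually reach $\oo$, if $\xi_t(\oo)$ were eventually equal to some fixed value with positive probability, then re-sampling $\xi_0(\oo)$ would change that eventual value with positive probability while preserving the law of the system; combined with $\rho_t\ge c/t$ (already proved, and valid for $D_B=0$) and translation invariance ---which keeps $A$-particles arriving at a positive density of sites--- this forces a contradiction in the non-surplus regime, and Lemma~\ref{lem:zeroone} concludes.

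The principal obstacle is the new graphical construction of part one: coupling a rate-$D_A$ walk, a rate-$D_B$ walk and the annihilation rule so that the worldline of any discrepancy is a single fixed-rate random walk, no matter how the annihilations pass it between the two species. A secondary, more technical point is making the $D_B=0$ re-sampling quantitative enough to exclude every eventual value of $\xi_t(\oo)$ outside the surplus regime.
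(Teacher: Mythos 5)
There is a genuine gap at the heart of your first step. You propose a coupled construction in which every discrepancy is carried by a random walk of \emph{one common fixed rate}, so that the Section~\ref{sec:samerates} argument reruns verbatim. Such a construction cannot exist: the discrepancy at any given time is embodied by an actual particle of $\xi$ or $\xi^m$ (an extra $A$-particle in one system or an extra $B$-particle in the other), and for the marginals to be correct that particle must jump at rate $D_A$ or $D_B$ according to its type. Hence any tracer necessarily changes its instantaneous rate each time the discrepancy is handed from one species to the other, and no amount of rearranging the coupling of initial conditions removes this. The paper's actual resolution is weaker and subtler: it pre-samples, for each tracer, a discrete skeleton $W^{y,j}$ with kernel $p$ together with \emph{nested} Poisson clocks $\T^B_{y,j}\subseteq\T^A_{y,j}$ of intensities $D_B\le D_A$; the tracer walks along the skeleton listening to $\T^A$ or $\T^B$ according to the type of the tracked particle, and is therefore \emph{not} a fixed-rate walk, but its range is sandwiched, $[X^{y,j}_-\meet w]\subseteq[X^{y,j}\meet w]\subseteq[X^{y,j}_+\meet w]$, between the ranges of the rate-$D_B$ and rate-$D_A$ walks run on the same skeleton. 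Upper bounds (avoidance) then use $X_+$, lower bounds (visits to $\oo$) use $X_-$; for pairs of tracers one must additionally \emph{entangle} $X^{y,1}$ and $X^{z,1}$ so that their difference follows a prescribed skeleton dominated by a rate-$2$ walk, replacing the switch trick~\eqref{eq:switchtricktwopaths}. This asymmetry is not cosmetic: it is why the recurrence constant comes out proportional to $D_B\gamma$ and why $D_B=0$ genuinely requires a separate proof rather than a limiting case. (Also, the spurious $K^{-1}$ is removed in the paper not by a cleverer initial coupling --- which is impossible for general $\nu$ --- but by bounding $2\rho_t\ge\E|\xi^m_t(\oo)-\xi_t(\oo)|$ and summing over all $K$ tracers per site.)

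Your $D_B=0$ sketch is also incomplete in a way that matters. Re-sampling $\xi_0(\oo)$ and invoking $\rho_t\ge c/t$ does not by itself produce a contradiction; the paper's argument needs two further ingredients you do not supply. First, from failure of site recurrence one extracts, via re-sampling finitely many sites and the monotonicity Lemma~\ref{lem:monotononicity}, a \emph{positive density} of sites whose $B$-particles are never touched; this set is ergodic, and an independent $A$-trajectory must hit it (Lemma~\ref{lem:hittinganergodicset}), whence \emph{every} $A$-particle is eventually annihilated. Second, the contradiction is closed by mass conservation (Lemma~\ref{lem:mtp}): $\mua_t-\mub_t$ is constant, $\mua_t\to0$, $\liminf_t\mub_t>0$, forcing $\mub_0>\mua_0$. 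Without the ergodic-hitting step and the conservation law, ``re-sampling changes the eventual value with positive probability'' does not rule out non-recurrence.
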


We would like to implement the strategy of tracking differences between coupled evolutions, as in Section~\ref{sec:samerates}.
The challenge is to construct a coupling which saves the previous strategy from breaking down.
We want to do this by unraveling different tracers from their dependence, while keeping some control on their trajectories.

\subsection{Coupling evolutions via tracers}

We are going to construct a coupling between two systems so that the trajectories of the tracers are given a priori.
We present this coupling first for $D_A = D_B$ and then for $D_A \neq D_B$.

Assume that $D_A=D_B=1$, and that the initial conditions $\xi_0$ and $\xi^m_0$ differ by only one particle.
The pair $(\xi,\xi^m)$ constructed in Section~\ref{sec:samerates} satisfies~(\ref{eq:diffonetracer}) and yields $(X^x_t)_{t \geq 0}$, distributed as a random walk with jump rate~$1$.
Let us consider an alternative construction.
Instead of obtaining~$X^x$ by following the tracked particle until its annihilation, we sample~$X^x$ beforehand, and force the tracked particle to follow~$X^x$ until its annihilation.
Now the tracer is actually \emph{dragging} the particle, but to keep consistency with previous sections we still say that it is \emph{tracking} it.
When the particle being tracked is annihilated, there is another particle which will carry the difference between systems~$\xi$ and~$\xi^m$.
From that time on, this particle forgets its instructions and follows the trajectory of the tracer (i.e., the tracer begins to track/drag that new particle).

This alternative construction yields a coupled pair of systems $(\xi,\xi^m)$ with the same distribution and satisfying~(\ref{eq:diffonetracer}).
Again, this is a $\oplus$-tracer, and for each time~$t$ it corresponds to an extra $A$-particle in~$\xi^m_t$ or an extra $B$-particle in~$\xi_t$.
The opposite holds for a $\ominus$-tracer.

We now consider the case $D_A = D_B $ but when~$\xi_0$ and~$\xi^m_0$ differ by infinitely many particles, as given by Lemma~\ref{lem:couplingic}.
Again, this is well-defined, except that the presence of two tracers of opposite sign at the same site may result in annihilations that leave both of them with no particles to track.
As before, we say that the tracer is active before that time and wandering after that.
This way we construct a pair $(\xi,\xi^m)$ satisfying~(\ref{eq:diffmanytracers}) and differing at $t=0$ on a random set~$\A_m$, which is Bernoulli and has intensity~$m$.

Now suppose $D_A \geq D_B \geq 0$.
We want a construction of $X^y$ that retains a good control on the set of sites visited by this path, which was used in the proof of site-recurrence for equal jump rates, particularly in~(\ref{eq:switchtrickonepath}).

Let~$\X^{y,j}$ denote a triple $\left( W^{y,j}, \T^A_{y,j} , \T^B_{y,j} \right)$, where $W^{y,j}=(W^{y,j}_n)_{n=0,1,2,\dots}$ is a discrete-time random walk with transition kernel $p(\cdot,\cdot)$ starting at $y$, $\T^B_{y,j} \subseteq \R_+$ is a Poisson clock with intensity~$D_B$, independent of $W^{y,j}$, and $\T^A_{y,j} \subseteq \R_+$ is obtained by adding an independent Poisson clock of intensity $D_A-D_B$ to $\T^B_{y,j}$.
The triples $\X^{y,j}$ are sampled independently over~$y$ and~$j$.

For each $y\in\A_m$ and each $j=1,\dots,K$, the tracer $X^{y,j}$ is defined by following the steps prescribed by $W^{y,j}$, and at each instant listening either to clock~$\T^A_{y,j}$ or~$\T^B_{y,j}$, depending on whether it is tracking an $A$-particle or a $B$-particle.
More precisely, each time~$t \in \T^A_{y,j}$ when a new Poissonian mark is found, $X^{y,j}$ moves to the next position in the sequence~$(W^{y,j}_n)_{n}$ if the particle being tracked it is of type~$A$, or if it is of type~$B$ and $t \in \T^B_{y,j}$.
If the particle being tracked is of type~$B$ and $t \in \T^A_{y,j} \setminus T^B_{y,j}$, nothing happens.

The rest of the construction is analogous to that described in Section~\ref{sec:samerates}.
Particles which are not being tracked move according to their respective instructions, which are the same in systems~$\xi$ and~$\xi^m$.
When a tracked particle is annihilated, the tracer starts to track another particle, unless there is a tracer of opposite sign at the same site and the differences cancel, in which case both tracers may be left with no particles to track.
We say that the tracer is active before that time, and wandering after that.
For completeness, after the tracer is no longer active it becomes wandering, and it listens to one of the clocks, say~$\T^A_{y,j}$.
Also for completeness, at sites $y \not \in \A_m$, we launch~$K$ tracers, which are wandering for all $t \geq 0$.

\begin{remark}
\label{rmk:couplingworks}
With the above construction, every active $\oplus$-tracer corresponds to an extra $A$-particle in~$\xi^m$ or an extra $B$-particle in~$\xi$, and the opposite holds for $\ominus$-tracers.
The difference between $\xi^m$ and $\xi$ is given by~(\ref{eq:diffmanytracers}).
An active tracer remains active at least until the first time when it meets a tracer of opposite sign.
Finally, the presence of an active tracer at site~$x$ at time~$t$ implies that $\xi_t(x) \ne 0$ or $\xi^m_t(x) \ne 0$.
\end{remark}

In order to obtain lower and upper bounds that will play the role of identity~(\ref{eq:switchtrickonepath}), we define the paths~$X^{y,j}_\pm = \big( X^{y,j}_\pm(t) \big)_{t\geq 0}$ by following the discrete path $W^{y,j}$ while listening respectively to clocks~$\T^A_{y,j}$ and~$\T^B_{y,j}$.
This way we have
\[ \left[ X^{y,j}_- \meet w \right] \subseteq \left[ X^{y,j} \meet w \right] \subseteq \left[ X^{y,j}_+ \meet w \right]  \]
for all $w\in G$.
Moreover, $X^{y,j}_\pm$ is distributed as a random walk with transition kernel $p(\cdot,\cdot)$ and jump rate given respectively by~$D_A$ and~$D_B$.
Also, since the pairs $(X^{y,j}_+,X^{y,j}_-)$ depend only on the respective~$\X^{y,j}$, they are independent over~$y$ and~$j$, and independent of $\A_m$.

We denote by~$\Pb$ the underlying probability measure.
This construction is well-defined and yields a coupled pair of systems~$(\xi,\xi^m)$ having the desired distribution.
The proof is in the same spirit as that of Lemma~\ref{lem:existence}, and will be omitted.

We will need yet another coupling.
Recall that~(\ref{eq:switchtricktwopaths}) was based on the fact that
\[ X^{z,1}-X^{y,1} \ \overset{\mathrm{d}}{=} \ Y^{z-y}, \]
where $Y^w$ is distributed as a random walk of jump rate $2 D_A = 2 D_B = 2$ starting at~$w$.
The following construction intends to provide $X^{z,1}$ and $X^{y,1}$ simultaneously, and still allow some control on the set of sites visited by $Y^{yz} := X^{z,1}-X^{y,1}$.

Let~$y$ and~$z$ be fixed.
For all $(x,j) \in \Z^d \times \{1,\dots,K\}$ except~$(y,1)$ and~$(z,1)$, we build the tracer~$X^{x,j}$ using~$\X^{x,j}$ as above.
The tracers~$X^{y,1}$ and $X^{z,1}$ will be \emph{entangled}, and they are constructed from the quintuple $\Y^{yz} = (Z^{yz},\T^A_y,\T^B_y,\T^A_z,\T^B_z)$ as follows.

Let $Z^{yz}=(Z^{yz}_n)_{n\in\N_0}$ be a discrete-time random walk that starts at $Z^{yz}_0 = z-y$ and jumps according to~$p(\cdot,\cdot)$.
At all times, the tracer~$X^{y,1}$ is listening either to clock~$\T^A_{y}$ or~$\T^B_{y}$, depending on the state of the particle it is tracking.
Analogously, the tracer~$X^{z,1}$ is listening either to clock~$\T^A_{z}$ or~$\T^B_{z}$.
When a new Poissonian mark is found at $t \in \T^A_{z}$, the tracer $X^{z,1}$ performs the next jump found in the sequence~$Z^{yz}$, i.e., it jumps by $Z^{yz}_{k+1}-Z^{yz}_k$ if the particle being tracked is of type~$A$ or if it is of type~$B$ and $t\in\T^B_z$.
If the particle being tracked is of type~$B$ and $t \in \T^A_{z} \setminus \T^B_{z}$, nothing happens.
Similarly, when a new Poissonian mark is found at $t \in \T^A_{y}$, the tracer $X^{y,1}$ performs the \emph{opposite} of the next jump found in the sequence~$Z^{yz}$, i.e., it jumps by $Z^{yz}_{k}-Z^{yz}_{k+1}$ if the particle being tracked is of type~$A$ or if it is of type~$B$ and $t\in\T^B_y$.
If the particle being tracked is of type~$B$ and $t \in \T^A_{y} \setminus \T^B_{y}$, nothing happens.
This way the difference~$Y^{yz}$ is reproducing in continuous time the discrete path prescribed by~$Z^{yz}$.

As before, each of $X^{y,1}$ and $X^{z,1}$ is active if it is actually tracking a particle, or else it is wandering, in which case it listens respectively to clock~$\T^A_{y}$ or $\T^A_{z}$.

We define the path~$Y^{yz}_+ = \big( Y^{yz}_+(t) \big)_{t \geq 0}$ by following the discrete path~$Z^{yz}$ and jumping on $\T^A_{y} \cup \T^A_{z}$, that is, it jumps when any of the clocks ring.
This way we have
\[ \left[ Y^{yz} \meet w \right] \subseteq \left[ Y^{yz}_+ \meet w \right] . \]
Moreover, $Y^{yz}_+$ is distributed as a random walk with transition kernel~$p(\cdot,\cdot)$ and jump rate $2$.
To indicate the use of this construction we denote the underlying probability measure by~$\Pb^{yz}$.
Notice that Remark~\ref{rmk:couplingworks} still holds for this construction.

\subsection{Density decay}

For brevity we introduce the notation $\tilde{\Pb}=\Pb(\,\cdot \,|\, y\in\A_m)$, $\hat{\Pb}=\Pb(\,\cdot\,|\,y\in\A_m,\,z\in\A_m)$, and $\hat{\Pb}^{yz}=\Pb^{yz}(\,\cdot\,|\,y\in\A_m,\,z\in\A_m)$.
Let $Y^{\boldsymbol o}_+$ be a continuous-time random walk with jump rate $2$ started at the origin.
For any $y\in\Z^d$, we have
\begin{align}
\tilde{\Pb}
\hspace{30mm}&\hspace{-30mm}
\big[ X^{z,j} \meet X^{y,1}
\text{ for some } z\in\A_m \setminus y \text{ and } j
\big]
\leq
\nonumber
\\&
\leq
mK
\,
\sum_{\mathclap{z\in\Z^d \setminus y}}
\,
\hat{\Pb}
\left[ X^{z,1} - X^{y,1} \meet \oo
\big.
\right]
\label{eq:differenceofwalks}
\\
&
=
mK
\,
\sum_{\mathclap{z\in\Z^d \setminus y}}
\,
\hat{\Pb}^{yz}
\left[ Y^{yz} \meet \oo
\big.
\right]
\nonumber
\\
&
\leq
mK
\sum_{\mathclap{z\in\Z^d \setminus y}}
\hat{\Pb}^{yz}
\left[ Y^{yz}_+ \meet \oo \right]
\nonumber
\\
&
=
mK
\sum_{\mathclap{z\in\Z^d \setminus y}}
\Pb
\left[ Y^\oo_+ \meet y-z \right]
\nonumber
\\
&
=
mK
\,
\E
\left[\text{\,number of new sites visited by }Y^{\oo}_+ \text{ up to time }t\,\right]
\nonumber
\\
&
\leq
2 \, m \, K \, t
\nonumber
.
\end{align}

Fix some $\delta^2<\frac{1}{2}$, and for each large enough $t$ let
\begin{equation*}
m = \frac{\delta^2}{Kt}
.
\end{equation*}
Plugging this into the previous estimate gives
\begin{equation}
\label{eq:avoideachotherDAneDB}
\tilde{\Pb}
\big[ X^{z,j} \avoid X^{y,1} \text{ for all } z\in\A_m \setminus y \text{ and } j \big]
\geq
1-2\delta^2
.
\end{equation}
Again as before,
\begin{align*}
2 \rho_t
\geq
\Pb\left[ \oo \in \A_m \text{ and } X^{z,j} \avoid X^{\oo,1} \text{ for all } z \in \A_m \setminus \oo \text{ and } j \right]
.
\end{align*}

Plugging~(\ref{eq:avoideachotherDAneDB}) we get for $t$ large enough
\begin{equation}
\label{eq:estimatewithK}
2\rho_t
\geq
m\, (1-2\delta^2) = \frac{\delta^2(1-2\delta^2)}{Kt}
,
\end{equation}
which would prove the theorem with $c=\delta^2(\frac{1}{2}-\delta^2)K^{-1}$.

Finally, let us eliminate the $K^{-1}$ term.
By symmetry,
\[
2 \rho_t = \E\left[ \big. |\xi_t(\oo)|+|\xi^m_t(\oo)| \right] \geq \E|\xi^m_t(\oo) - \xi_t(\oo)| = 2 \E[\xi^m_t(\oo)-\xi_t(\oo)]^+
.
\]
Hence
\begin{multline*}
\rho_t\geq \E\big[\,\xi^m_t(\oo)-\xi_t(\oo)\,\big]^+ = \\ = 
\E \big[ \big.
\text{number of active $\oplus$-tracers minus active $\ominus$-tracers at } \oo \text{ at time }t
\big]^+
.
\end{multline*}
Now observe that,
if at time $t$ the origin has a $\oplus$-tracer that has not yet met any tracer with different starting point, then this $\oplus$-tracer must be active and there cannot be a $\ominus$-tracer at $\oo$ at time $t$.
Therefore, continuing from the above lower bound we get
\begin{equation}
\nonumber
\begin{aligned}
\rho_t
& \geq
\E \left[ \# \left\{ (x,i) : y \in \A_m^+, X^{y,i}_t = \oo, X^{z,j} \avoid X^{y,i} \text{ for all } z \in \A_m\setminus y \text{ and } j \right\} \right]
\\
& =
\sum_{y \in \Z^d} \sum_{i=1}^K
\Pb \left[ y \in \A_m^+, X^{y,i}_t = \oo, X^{z,j} \avoid X^{y,i} \text{ for all } z \in \A_m\setminus y \text{ and } j \right]
\\
& =
\tfrac{m}{2} K
\sum_{y \in \Z^d}
\Pb \left[ X^{y,1}_t = \oo, X^{z,j} \avoid X^{y,1} \text{ for all } z \in \A_m\setminus y \text{ and } j \big\vert y\in\A_m^+ \right]
\\
& =
\tfrac{m}{2} K
\sum_{y \in \Z^d}
\Pb \left[ X^{\oo,1}_t = y, X^{z,j} \avoid X^{\oo,1} \text{ for all } z \in \A_m\setminus \oo \text{ and } j \big\vert \oo\in\A_m^+\right]
\\
& =
\tfrac{m}{2} K
\, \Pb \left[ X^{z,j} \avoid X^{\oo,1} \text{ for all } z \in \A_m\setminus \oo \text{ and } j \big\vert \oo\in\A_m^+\right]
\\ &
\geq
\tfrac{m}{2} K
\, (1-2\delta^2)
,
\end{aligned}
\end{equation}
proving the lower bound with $c=\delta^2(\frac{1}{2}-\delta^2)$.

\subsection{Site recurrence}

We now prove site recurrence.
We assume that $D_B > 0$.
The case $D_B = 0$ is considered in Section~\ref{sec:fixedobstacles}.

As in Section~\ref{sec:samerates}, we consider $\BB_t$ given by~(\ref{eq:BB}), and show that $\Pb(\BB_t) \geq \epsilon$ for some $\epsilon>0$, for all $t$ sufficiently large.
From that, one obtains site recurrence by the same reasoning as in Section~\ref{ss:siterecurrencezd}.

Again, choose $m=\frac{\delta^2}{Kt}$ as above.
By~(\ref{eq:avoideachotherDAneDB}) we have
\[
\tilde{\Pb} \big[X^{z,j} \meet X^{y,1} \text{ for some } z \in\A_m \setminus y \text{ and } j \big]\leq 2\delta^2
\]
for $t$ large enough, and an analogous argument gives
\begin{multline*}
\tilde{\Pb}
\big[X^{z,j}_+ \meet \oo \text{ for some } z \in\A_m \setminus y \text{ and } j \big]
\\
\leq
mK
\, \E
\left[ \text{\,number of sites visited by $X^{\oo,1}_+$ up to $t$\,} \right]
\leq
mK (t+1) \leq 2 \delta^2
.
\end{multline*}

We say that $X^{y,1}_-$ is \emph{good} if
\(
\tilde{\Pb} \left[ X^{z,j} \avoid X^{y,1} \ \forall z\in \A_m \setminus y \text{ and } j
\, \big| \,
X^{y,1}_-
\right] \geq 1-\delta
.
\)
Notice that by~(\ref{eq:avoideachotherDAneDB}) we have
\(
\Pb [X^{y,1}_- \mbox{ is good} ]
=
\tilde{\Pb} [X^{y,1}_- \mbox{ is good} ]
\geq 1-2\delta.
\)
The probability of~$\BB_t$ is estimated as follows.
\begin{align}
\Pb(\BB_t)
\geq
& \,
m
\textstyle
\sum_y
\tilde{\Pb}
\big[ X^{y,1}_- \meet \oo ,\, X^{y,1}_- \text{ good},\,
X^{z,j}_+ \avoid \oo,\, X^{z,j} \avoid X^{y,1} \text{ for all } z\in\A_m \setminus y \text{ and } j \big]
\nonumber
\\
=
& \,
m
\textstyle
\sum_y
\Pb
\big[ X^{y,1}_- \meet \oo ,\, X^{y,1}_- \text{ good} \big]
\times
\nonumber
\\
&
\times
\tilde{\E}
\left[ \tilde{\Pb} \left[
\begin{array}{c}
X^{z,j}_+ \avoid \oo \text{ and } X^{z,j} \avoid X^{y,1} \\ \text{ for all } z\in\A_m\setminus y \text{ and } j
\end{array}
\bigg| \, X^{y,1}_-
\right]
\,\Bigg|\, X^{y,1}_- \meet \oo ,\, X^{y,1}_- \text{ good} \right]
\nonumber
.
\end{align}
As in Section~\ref{sec:samerates}, the conditional probability is only being integrated on a subset of $[X^{y,1}_- \text{ good}]$, and $X^{y,1}_-$ is independent of $X^{z,j}_+$,
thus by a simple union bound we get
\begin{align}
\Pb(\BB_t)
&
\geq
m(1-\delta-2\delta^2)
\textstyle
\sum_y
\Pb
\big[ X^{y,1}_- \meet \oo ,\, X^{y,1}_- \text{ good} \big]
\nonumber
\\
&
=
m(1-\delta-2\delta^2)
\textstyle
\sum_y
\Pb
\big[ X^{\oo,1}_- \meet y ,\, X^{\oo,1}_- \text{ good} \big]
\nonumber
\\
&
=
m(1-\delta-2\delta^2)
\
\E
\big[ \# \text{ sites visited by }X^{\oo,1}_- \text{ during } [0,t],\, X^{\oo,1}_- \text{ good} \big]
\nonumber
.
\end{align}
As in Section~\ref{sec:samerates} this gives, for~$t$ large enough,
\begin{align*}
\Pb(\BB_t)
&
\geq
m (1-\delta-2\delta^2)
\, \gamma D_B \, t \, (1-2\delta-\delta)
=
\delta^2 \, \gamma D_B
\, (1-\delta-2\delta^2)
\, (1-3\delta)
K^{-1}
.
\end{align*}
Choosing $\delta=\frac{1}{4}$ and writing $\epsilon=\frac{D_B\gamma}{103 K}$, the above estimate yields $\Pb(\BB_t) \geq \epsilon$ for all~$t$ large enough.
As shown in Section~\ref{sec:samerates}, this implies site recurrence.

If the transition kernel~$p(\cdot,\cdot)$ yields a recurrent random walk on~$\Z^d$, the same argument given for $D_A=D_B$ in the previous section works.

\subsection{Fixed obstacles}
\label{sec:fixedobstacles}

We now prove site recurrence for the case $D_B = 0$ and $\mua_0 \geq \mub_0$ (in the case $\mua_0<\mub_0$, there is a positive density of fixed $B$-particles which survive forever and the state of the origin is eventually constant, being empty or containing $B$-particles. Hence, in this case the system is not site-recurrent).
The proof consists on showing that a system which is not site recurrent necessarily satisfies $\mub_0 > \mua_0$.
The latter assertion follows from the two propositions below.

\begin{proposition}
\label{prop:originnevervisited}
If the two-type annihilating system with fixed $B$-particles is not site recurrent, then,
with positive probability, $\xi_t(\oo) = \xi_0(\oo) \leq -1$ for all $t\geq 0$.
\end{proposition}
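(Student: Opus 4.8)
The plan is to combine two structural facts — that an excess of $A$-particles at a site cannot persist, and that for $D_B=0$ the process is Markov — with a re-sampling argument that transfers information from the time-$T$ configuration back to the i.i.d.\ initial configuration.

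First I would reduce to a cleaner statement. Since the system is not site recurrent, $\Pb[\xi_t(\oo)\text{ is eventually constant}]>0$, and on this event the eventual value is $\le 0$: if $\oo$ carried $k\ge 1$ $A$-particles for all $t\ge s$ then after $s$ there can be no jump event involving $\oo$ (otherwise $\xi_\cdot(\oo)$ would change), yet each of those $k$ particles has a rate-$D_A$ clock with $D_A>0$ and, almost surely, no two of the countably many $A$-particles jump at the same instant — a contradiction. Hence on $\{\xi_t(\oo)\text{ eventually constant}\}$ there is an a.s.\ finite time after which no $A$-particle is ever present at $\oo$, and by countable additivity there are a deterministic $T$ and $\delta>0$ with $\Pb\big[\text{no }A\text{-particle is present at }\oo\text{ on }[T,\infty)\big]\ge\delta$; on this event necessarily $\xi_T(\oo)\le 0$.

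Next I would restart the process at time $T$. For $D_B=0$ the process $(\xi_t)_{t\ge0}$ is Markov — each $A$-particle performs an independent continuous-time walk and is deleted, together with one $B$-particle, exactly when it lands on a $B$-occupied site, with no tie-breaking needed since a.s.\ no two $A$-particles jump at once. Writing $h(\eta):=\Pb_\eta[\text{no }A\text{-particle ever reaches }\oo]$, the Markov property gives $\delta\le\E[h(\xi_T)]$, so $h(\xi_T)>0$ with positive probability. By Lemma~\ref{lem:monotononicity}, $h$ is nonincreasing, since an $A$-particle follows the same putative trajectory in coupled systems and dies no sooner in the larger one, so adding $A$-particles or removing $B$-particles can only enlarge the set of $A$-visits to $\oo$; moreover $h(\eta)=0$ if $\eta(\oo)\ge 1$, while for $\eta(\oo)\le 0$ the value $h(\eta)$ is independent of $\eta(\oo)$, because on the target event the $B$-particles sitting at $\oo$ never interact with anything.

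The remaining, and main, step is to pass from the law of $\xi_T$ to the i.i.d.\ law $\nu^{\otimes}$. On the good event the family of $A$-particles present at $\oo$ at some time in $[0,T)$ is a.s.\ finite — its expected size is at most $\E[\xi_0(\oo)^+]\cdot\E\big[\#\{x:\text{a rate-one walk started at }x\text{ reaches }\oo\text{ by time }T\}\big]\le\E[\xi_0(\oo)^+]\,(1+T)$ by reversibility of $p(\cdot,\cdot)$ — so one reveals this finite family and re-samples the initial configuration in a large ball around $\oo$, together with the relevant instructions, keeping the model's distribution intact, to create with positive probability an initial configuration with $\xi_0(\oo)\le -1$ from which no $A$-particle ever reaches $\oo$: the shielding that holds after time $T$ is reproduced from time $0$ once enough $B$-particles are planted near $\oo$ with fresh instructions. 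One also needs $\Pb[\xi_0(\oo)\le -1]>0$, for otherwise $\nu$ is supported on the nonnegative integers, there are no $B$-particles, the system is a field of independent random walks, and a Paley--Zygmund second-moment bound on $\xi_n(\oo)^+$ together with the zero--one law (Lemma~\ref{lem:zeroone}) forces $\oo$ to be visited infinitely often, contradicting non-recurrence. Since $\Pb[\,\xi_t(\oo)=\xi_0(\oo)\le -1\ \forall t\,]=\Pb[\xi_0(\oo)\le -1]\cdot\Pb[\text{no }A\text{ ever reaches }\oo\mid \xi_0(\oo)\le -1]$ and the re-sampling shows the second factor is positive, the proposition follows. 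The delicate point is exactly this transfer: the law of $\xi_T$ is nowhere near i.i.d., so positivity of $h$ cannot be moved onto $\nu^{\otimes}$ by a soft comparison, the re-sampling must genuinely exploit that $B$-particles are absorbing obstacles, and I would expect the recurrent-$p$ case to require a separate argument, tied to the surplus $\mub_0>\mua_0$.
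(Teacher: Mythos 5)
Your opening reduction is sound and overlaps with the paper's first step: non-recurrence gives, with positive probability, that only finitely many $A$-particles ever visit $\oo$ (your rate-$D_A$ argument ruling out a persistent positive value, and your expected-range bound for $[0,T)$, are both fine). The edge case $\nu(\{\dots,-1\})=0$, which the paper glosses over, is also handled correctly. But the step you yourself flag as ``the delicate point'' is exactly where the proof is missing, and the mechanism you sketch for it does not work as described. Re-sampling ``the initial configuration in a large ball around $\oo$, together with the relevant instructions'' destroys the very event you are trying to preserve: the finitely many dangerous $A$-particles may start outside any fixed ball, and changing the instructions (or the configuration) inside the ball changes which particles they annihilate with and hence their trajectories' effective lifetimes, so the event that they all miss $\oo$ in the original realization says nothing about the re-sampled one. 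Likewise, ``planting enough $B$-particles near $\oo$ with fresh instructions'' cannot reproduce the shield, because the shielding at time $T$ is the highly correlated configuration $\xi_T$, not something you can manufacture from i.i.d.\ data in a ball while the exterior remains i.i.d. Your detour through the Markov property and the harmonic-like function $h$ does not help close this: it restates positivity of the target event under the wrong (time-$T$) law.

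The idea you are missing is a sharper localization. Since with positive probability the set of $A$-particles ever visiting $\oo$ is finite, there is a \emph{deterministic finite} set of starting sites $x_1,\dots,x_r$ such that, with positive probability, every $A$-particle that ever visits $\oo$ starts at one of them (event $\AA$). One then re-samples the initial condition \emph{only} at $\{\oo,x_1,\dots,x_r\}$, keeping the instructions and the rest of $\xi_0$ \emph{fixed}. The original values $\xi_0$ at these finitely many sites are independent of the re-sampled system $\xi'$, so $[\AA\text{ for }\xi']$ and $[\xi_0(x_i)\le 0\ \forall i,\ \xi_0(\oo)\le -1]$ are independent events of positive probability. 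On their intersection, the dangerous $A$-particles of $\xi'$ are simply absent from $\xi$, and Lemma~\ref{lem:monotononicity} (applicable precisely because the instructions are shared) shows no $A$-particle can reach $\oo$ in $\xi$, so the $B$-particles initially at $\oo$ survive forever. This finite re-sampling plus monotonicity is what replaces your unbounded-ball re-sampling, and it is the content your proposal lacks.
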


\begin{proposition}
\label{prop:annihilationofAparticles}
If, with positive probability, $\xi_t(\oo) = \xi_0(\oo) \leq -1$ for all $t\geq 0$, then, almost surely, every $A$-particle is eventually annihilated.
\end{proposition}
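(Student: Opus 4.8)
The plan is to argue by contradiction: assume that, on a positive-probability event, the origin holds its initial $B$-particles forever while simultaneously some $A$-particle survives for all time, and derive a contradiction via mass conservation together with the re-sampling invariance of the graphical construction. First I would fix the event $E=[\xi_t(\oo)=\xi_0(\oo)\le-1\ \forall t\ge 0]$, which has positive probability by hypothesis, and observe that on $E$ the $B$-particles sitting at $\oo$ at time $0$ are never annihilated; since $D_B=0$ they never move, so they remain at $\oo$ forever. The key idea is that the event $E$ depends on the initial condition and on the putative trajectories and bravenesses of all particles \emph{except} on how any surviving $A$-particle behaves after it has wandered far from $\oo$ --- more precisely, the surviving fixed $B$-particles at $\oo$ act as permanent obstacles, and conditioning on $E$ biases the local environment near $\oo$ but does not prevent $A$-particles elsewhere from eventually reaching $\oo$.

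The main step is the following dichotomy, which should be extracted from Lemma~\ref{lem:mtp}. By mass conservation, $\mua_t-\mub_t$ is constant in time, so $\mua_t\to 0$ if and only if $\mub_t\to\mua_0-\mub_0+\text{(nothing extra since the difference is constant)}$; under $\mua_0\ge\mub_0$ one has $\mua_t-\mub_t=\mua_0-\mub_0\ge 0$, hence $\mua_t\ge\mub_t\ge0$ at all times and $\mua_t$ is nonincreasing and bounded below. The content to prove is that $\mua_t\to 0$ forces \emph{every} $A$-particle to be annihilated almost surely, not just that the density vanishes. By the last assertion of Lemma~\ref{lem:mtp}, $\Pb[\T_{\oo,1}<\infty]=1$ is equivalent to $\mua_t\to0$; so it suffices to show $\mua_t\to 0$, and then translation covariance (Lemma~\ref{lem:existence}) upgrades "the particle labelled $(\oo,1)$ is eventually annihilated a.s." to "every $A$-particle is eventually annihilated a.s." by a countable union over labels. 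Thus the proposition reduces to showing $\mua_t\to 0$.

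To show $\mua_t\to 0$ I would use the positive-probability event $E$ to manufacture a genuine rate of annihilation of $A$-particles. Suppose for contradiction $\mua_t\to a>0$; then also $\mub_t\to a-(\mua_0-\mub_0)$ and the density of annihilations $\Theta_t$ converges, so only finitely many annihilations per site occur in expectation --- in particular the expected number of $B$-particles ever annihilated at $\oo$ is finite. On the other hand, pick a site $z$ far from $\oo$ and consider the $A$-particle started at $z$: by a re-sampling argument (the same device used to define the fixed-obstacle system), on an event of probability bounded below uniformly in the environment, this $A$-particle performs a long excursion that reaches $\oo$ before any competing $A$-particle does, and if $\oo$ still holds a $B$-particle it gets annihilated there. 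Summing a lower bound for the annihilation rate at $\oo$ over the positive density of sites $z$ whose $A$-particle can reach $\oo$, one finds $\Theta_t\to\infty$, contradicting $\mua_t\to a>0$. The delicate point --- and the step I expect to be the main obstacle --- is making the re-sampling argument rigorous: one must condition on the full configuration outside a space-time window, re-sample the instructions inside, and verify that the event "a fixed $B$-particle survives at $\oo$ up to time $t$" is compatible with "an $A$-particle arrives and annihilates it", i.e., that conditioning on long-term survival of the origin's $B$-particles does not secretly forbid $A$-particles from ever arriving; here one exploits that $E$ is, up to a null set, determined by instructions in a region that can be taken disjoint from the excursion path of a far-away $A$-particle, so the two events are (conditionally) independent enough to multiply probabilities.
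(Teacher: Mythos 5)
Your reduction via Lemma~\ref{lem:mtp} (it suffices to show $\mua_t\to 0$, since $\Pb[\T_{\oo,j}<\infty]=1$ for each $j$ is equivalent to the density vanishing) is fine and matches the spirit of how the proposition is used. But the contradiction you then aim for cannot occur: $\Theta_t=\mua_0-\mua_t\le\mua_0$ is \emph{always} bounded, whether or not $\mua_t\to a>0$, and the number of annihilations ever occurring at $\oo$ is at most $\xi_0(\oo)^-$, which has finite expectation. Summing ``a lower bound for the annihilation rate at $\oo$'' over infinitely many far-away sites $z$ overcounts, because once the finitely many fixed $B$-particles at $\oo$ are gone, further visits produce no annihilations. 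Moreover, the ``delicate point'' you flag is resolved in the wrong direction: conditioning on $E=[\xi_t(\oo)=\xi_0(\oo)\le-1\ \forall t]$ does not merely bias the environment near $\oo$ --- it \emph{exactly} forbids any $A$-particle from ever reaching $\oo$, since such an arrival would annihilate a $B$-particle there and destroy $E$. So no amount of ``conditional independence of disjoint regions'' lets you multiply probabilities; the correlation between a surviving particle's trajectory and the set of surviving obstacles is total, and that is precisely the obstruction the proof must overcome.

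The missing ideas are the two lemmas the paper leans on. One considers the ergodic, positive-density set $\B$ of sites whose $B$-particles survive forever in a system $\xi'$, then builds a coupled system $\xi$ in which the single particle $(\oo,1)$ is deleted and re-inserted with a trajectory $S^{\oo,1}$ sampled \emph{independently} of everything else. Monotonicity (Lemma~\ref{lem:monotononicity}) guarantees that the deletion alone cannot cause any site of $\B$ to be visited, so $\B$ remains a set of permanent obstacles for all particles other than the re-inserted one; and the ergodic-hitting result (Lemma~\ref{lem:hittinganergodicset}) forces the independent walk $S^{\oo,1}$ to hit $\B$, at which point $(\oo,1)$ is annihilated (there or earlier). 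Your sketch gestures at re-sampling but never isolates a single trajectory made independent of $\B$, never invokes monotonicity to preserve $\B$ under the deletion, and replaces the hitting argument by a density computation that yields only a bounded quantity. As written, the proof does not go through.
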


Proposition~\ref{prop:originnevervisited} implies that the density of $B$-particles does not vanish, that is, $\lim_t \mub_t > 0$.
On the other hand, by Proposition~\ref{prop:annihilationofAparticles}, every $A$-particle is eventually annihilated.
Therefore, using Lemma~\ref{lem:mtp}, we get that the density of $A$~particles satisfies $\lim_t \mua_t = 0$.
Again by Lemma~\ref{lem:mtp}, $\mua_t-\mub_t$ is constant in time, and therefore $\mua_0-\mub_0<0$.

\begin{proof}
[Proof of Proposition~\ref{prop:originnevervisited}]
Suppose that the system is not site recurrent, i.e.,
\begin{equation*}
\Pb[\boldsymbol o  \text{ is visited infinitely often}]<1.
\end{equation*}
This implies that, with positive probability, the set of $A$-particles which visit $\boldsymbol o$ is finite.
Thus, there exist $r\in\N$ and $x_1,\dots,x_r\in \Z^d$, such that $\Pb(\AA)>0$, where $\AA$ denotes the event that only particles starting at $x_1,\dots,x_r$ visit $\oo$ and these sites contain no $B$-particles.

Consider a pair of two-type annihilating systems $(\xi,\xi')$ constructed as follows.
We sample the same instructions for $\xi$ and $\xi'$.
Let $\xi_0$ be sampled as an i.i.d.~field with marginal~$\nu$.
Take $\xi'_0(x)=\xi_0(x)$ for $x\not\in\{\oo,x_1,\dots,x_r\}$.
For $x\in\{\oo,x_1,\dots,x_r\}$, sample $\xi'_0(x)$ independently with marginal~$\nu$.
Let us consider the events
\[
\big[\AA \text{ occurs for } \xi'\big] \text{ and }\big[\xi_0(x_i) \leq 0 \text{ for } i=1,\dots,r, \text{ and } \xi_0(\oo) \leq -1\big]
.
\]
Suppose that both of the above events occur.
Then all $A$-particles which visit the origin in~$\xi'$ are absent in~$\xi_0$.
Recalling that the systems share the same instructions, by Lemma~\ref{lem:monotononicity} the lifetime of other $A$-particles can only decrease compared to~$\xi'$, and therefore no $A$-particle can ever visit~$\oo$ in the system~$\xi$.
Since $\xi_0(\oo) \leq -1$, in the system~$\xi$ the site~$\oo$ does contain $B$-particles and they are never annihilated.

On the other hand, the events $\big[\AA \text{ occurs for } \xi'\big]$ and $\big[\xi_0(x_i) \leq 0 \text{ for } i=1,\dots,r, \text{ and } \xi_0(\oo) \leq -1\big]$ are independent, and they both have positive probability.
Hence the probability that $\xi_t(\oo) = \xi_0(\oo) \leq -1 \) for all $t$ is positive.
\end{proof}

\begin{proof}
[Proof of Proposition~\ref{prop:annihilationofAparticles}]
Since the law of the system is invariant under translations and under permutations of the labels of particles initially present at the same site, it suffices to show that,
almost surely on the event $[\xi_0(\boldsymbol o)\geq1]$, particle $(\oo,1)$ is eventually annihilated.

Consider a pair $(\xi,\xi')$ of two-type annihilating systems constructed as follows.
The initial condition is the same for $\xi$ and $\xi'$ and the instructions are the same except for the trajectory assigned to the first $A$-particle $(\oo,1)$ possibly present at $\oo$, which is chosen independently for~$\xi$ and~$\xi'$.

Define
\begin{equation*}
\B = \{x\in \Z^d: \xi'_t(x) = \xi'_0(x) \leq-1, \text{ for all } t\geq0\},
\end{equation*}
that is, $\B$ is the set of sites that initially contain at least one $B$-particle and that are never visited by $A$-particles in the system $\xi'$.
Since $\B$ is a translation co-variant function of initial conditions and instructions, which, in turn, are distributed as a product measure, it follows that $\B$ is ergodic under every translation on $\Z^d$.
By assumption, the set $\B$ has positive density.

Note that, on the event $[\xi'_0(\boldsymbol o)\geq 1 ]$, the system $\xi$ is obtained from $\xi'$ by deleting the first $A$-particle at $\oo$ and then placing a new one, with an independent trajectory, which we denote by $S^{\boldsymbol o,1}$.
By Lemma~\ref{lem:monotononicity}, this deletion cannot cause sites in $\B$ to be visited.
On the other hand, the walk $S^{\boldsymbol o,1}$ is independent of $\B$.
Therefore, by Lemma~\ref{lem:hittinganergodicset} below, $S^{\boldsymbol o,1}$ must hit the set $\B$ at some time $t$ at some site $x\in\B$.
Since~$x$ contains at least one $B$-particle and is never visited by other particles rather than $(\oo,1)$, the first $A$-particle at~$\oo$ in the system~$\xi$ is either annihilated before time~$t$, or it is annihilated at~$x$ at time~$t$.
\end{proof}

\begin{lemma}
\label{lem:hittinganergodicset}
Let $p(\cdot,\cdot)$ be a transition kernel on $\Z^d$ satisfying $p(x,x+y)=p(\oo,y)=p(\oo,-y)$.
Let $\B\subset\Z^d$ be a random set, whose distribution is ergodic and invariant with respect to translations.
Let $(X_n)_{n\in\N_0}$ be a random walk on $\Z^d$ which starts at $\boldsymbol o$ and jumps according to $p(\cdot,\cdot)$, and independent of $\B$.
Then almost surely $X$ hits $\B$ infinitely often.
\end{lemma}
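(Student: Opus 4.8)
The plan is to reduce the statement to a recurrence-type argument for the random walk $X$ relative to the random scenery $\B$, exploiting that $\B$ has positive density and is ergodic under all translations, and that $X$ is independent of $\B$. First I would record the basic quantity: let $p_\infty = \lim_{n\to\infty}\Pb[X_k \notin \B \text{ for all } k \le n \mid \B]$ be the quenched probability that $X$ never hits $\B$. Since $X$ is independent of $\B$, by dominated convergence the annealed never-hit probability is $\E[p_\infty]$, and it suffices to show $\E[p_\infty]=0$, i.e.\ that almost surely $X$ hits $\B$ at least once; the "infinitely often" conclusion then follows by the Markov property applied at the first hitting time together with translation covariance of the law of $\B$ as seen from $X$ at that time (more precisely, one runs the same ``hits at least once'' statement starting from the hitting position, using that the post-hitting increments of $X$ are again an independent walk and that $\B$, shifted to be centered there, is still ergodic with the same positive density).

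The core is therefore to prove $\Pb[X \text{ never hits } \B] = 0$. Here is the mechanism I would use. Fix $T$ large. Consider the range $\{X_0,\dots,X_T\}$; since $p(\oo,y)=p(\oo,-y)$, the walk is symmetric, hence either recurrent or transient, and in both cases $\E[\#\{X_0,\dots,X_T\}] = R_T \to \infty$ (it grows linearly in the transient case, and still tends to infinity in the recurrent case, e.g.\ like $\sqrt{T}$, $T/\log T$, or $T$). Now condition on $\B$. The event $[X_k \notin \B,\ 0 \le k \le T]$ says that an independent walk of length $T$ avoids $\B$. I would show that the probability of this event, averaged over $\B$, tends to $0$ as $T\to\infty$, using a second-moment / mass-transport style counting: write $1 = \Pb[\oo \notin \B] \cdot (\text{something}) + \dots$; more cleanly, use that $\sum_{x} \Pb[x \in \B, X \text{ visits } x \text{ by time } T]$ equals $\E\big[\sum_{k\le T}\Pb[X_k \in \B \mid X]\big]$, and by stationarity of $\B$ this is $\rho_\B \cdot \E[\#\{X_0,\dots,X_T\}] = \rho_\B R_T$, where $\rho_\B = \Pb[\oo\in\B] > 0$. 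If $X$ avoided $\B$ on $[0,T]$ with probability bounded below by some $\eta > 0$ (annealed), then a conditional variant of this identity, combined with the fact that along an avoiding path none of the $R_T \to \infty$ distinct visited sites lie in $\B$, would force the "density of $\B$ as seen along the path" to vanish, contradicting ergodicity of $\B$ — by the ergodic theorem the empirical density of $\B$ along any fixed deterministic infinite path that visits infinitely many sites must converge to $\rho_\B$. To make this last step rigorous I would use the multiparameter (Wiener) ergodic theorem: for a.e.\ realization of $\B$ and every deterministic sequence of sites with range growing to infinity in a Følner sense, the fraction inside $\B$ converges to $\rho_\B>0$; then freeze a path realization of $X$ (which a.s.\ has infinite range by symmetry), apply this to the conditioned-on-$\B$, and conclude that $X$ cannot avoid $\B$ forever.

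The main obstacle is the last step: an arbitrary random-walk path is \emph{not} a Følner sequence, so one cannot naively invoke the ergodic theorem to say "$\B$ has density $\rho_\B$ along the path.'' The clean fix is to avoid pointwise-along-the-path statements entirely and argue by contradiction at the level of expectations: assume $\Pb[X\text{ never hits }\B] = \eta > 0$; then there is a set of $\B$-configurations of positive measure on which the quenched avoidance probability $p_\infty(\B) \ge \eta/2$; on such a $\B$, run a fresh independent walk and note that \emph{conditioning a walk to avoid $\B$ forever is a Doob $h$-transform} whose transition kernel still spreads out, so the conditioned walk also has infinite range; then apply mass transport / translation averaging to the pair (conditioned walk, $\B$) to derive that $\rho_\B$ ``seen by the conditioned walk'' is simultaneously $0$ (it avoids $\B$) and positive (by stationarity of $\B$ and the fact that the conditioned walk still equidistributes in a weak averaged sense), a contradiction. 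Alternatively — and this is probably the shortest route — one sidesteps $h$-transforms: pick $R$ so large that $\Pb[\oo \in \B] > 0$ and $\Pb[X\text{ hits }B_R(\oo)\text{ and then, from a uniformly chosen hit site, reaches a given neighbor within one step}]$ is bounded below; then use a standard conditional Borel–Cantelli along a sequence of disjoint space-time windows in which $X$ has a uniformly positive chance of landing on $\B$ given $\B$ has positive density there, and the windows can be chosen independent enough (using the Markov property of $X$ and stationarity of $\B$) that the events "window $i$ produces a hit'' happen infinitely often a.s. I would write up this disjoint-windows version, as it only uses the Markov property of $X$, stationarity of $\B$, and the ergodic theorem in the mild form "$\B$ has positive density in infinitely many disjoint large boxes, a.s.''.
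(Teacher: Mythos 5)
Your proposal correctly isolates the central difficulty --- that the first-moment identity $\sum_{k\le T}\Pb[X_k\in\B]=(T+1)\,\Pb[\oo\in\B]$ does not by itself force a hit --- but none of the fixes you sketch actually closes it. The $h$-transform route asserts that the walk conditioned to avoid $\B$ ``still equidistributes in a weak averaged sense''; that assertion is essentially the statement to be proved, and nothing in the hypotheses (no moment or aperiodicity assumptions on $p$, only ergodicity of $\B$) makes it automatic. The disjoint-windows route has the same gap in a different place: knowing that $\B$ has density close to $\Pb[\oo\in\B]$ in infinitely many large boxes does \emph{not} give a lower bound, uniform over windows, on the conditional probability that the walk lands on $\B$ during a given window. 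For a transient or heavy-tailed $p$, the set of sites the walk can reach with non-negligible probability in a bounded window is a vanishing fraction of any large box, and conditionally on $\B$ that reachable set may carry none of the density; ergodicity controls box averages, not the configuration of $\B$ along the (non-F\o lner) trace of the walk --- exactly the obstruction you flag but do not resolve. Your preliminary reduction to ``hits at least once'' is also both unnecessary and delicate: after conditioning on the first hit, the law of the environment shifted to the hitting position is no longer that of $\B$, so the restart argument needs a quenched reformulation.

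The missing idea, which is how the paper argues, is to look at the environment seen from the particle. Set $q_n=\sup_{k\in\N_0,\,w\in\B}p^k(X_n,w)$. Ergodicity and positive density give $q_0>0$ a.s.\ (the set $\B$ meets the communicating class of the origin), and since $X$ is independent of $\B$ the sequence $(q_n)_{n}$ is stationary, whence $\Pb[q_n\to0]=0$. So a.s.\ there is a random $\delta>0$ with $q_n\ge2\delta$ infinitely often, and at each such time there is a $k_n$ with $\Pb\left[X_{n+k_n}\in\B\mid X_1,\dots,X_n;\B\right]\ge\delta$; a conditional Borel--Cantelli lemma then yields infinitely many hits directly, with no need for the first-hit reduction. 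This stationarity argument for $q_n$ is the one genuinely new ingredient your proposal lacks.
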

\begin{proof}
Postponed to Appendix~\ref{sec:appendix}.
\end{proof}

\section{Generously transitive graphs}
\label{sec:generalgraphs}

Let $G$ be a transitive, connected graph of finite degree, and let~$\oo$ denote an arbitrary site of~$G$.
We say that $G$ is \emph{generously transitive}\footnotemark\ if there is a group of automorphisms $\Gamma$ of~$G$ such that, for all $x,y\in G$ there exists $\pi\in\Gamma$ satisfying $\pi x = y$ and $\pi y = x$.

\footnotetext{%
A graph~$G$ being generously transitive is stronger than being unimodular, and it is neither stronger nor weaker than being Cayley.
An example of a graph that is generously transitive but not Cayley is the product $P \times \Z$, where $P$ is the Petersen graph. Cayley graphs of Abelian groups are generously transitive.
An example of a graph that is not generously transitive but is Cayley is the free product $\Z_2 * \Z_3$.
}

Let $p:G \times G \to [0,1]$ be a transition kernel.
Take~$\Gamma_p$ as the set of automorphisms~$\pi$ of~$G$ such that $p(\pi x, \pi y)=p(x,y)$ for all $x,y\in G$.
We say that $p$ is \emph{reflectable} if the group $\Gamma_p$ makes $G$ a generously transitive graph.

Generously transitive graphs are for instance, regular trees, finite complete graphs, and products of these, such as slabs with periodic boundary conditions.
Examples of reflectable walks include the uniform nearest-neighbor walk, or any walk whose transition probability depends only on the distance.

To avoid degenerate cases we assume that the sets
\begin{equation}
\label{eq:cx}
\C_x = \{w\in G:\exists n\in\N_0,\, p^n(x,w)>0\}
\end{equation}
are infinite.
In this section we prove the following:
\begin{theorem}
\label{thm:generalgraphs}
Let $(\xi_t)_{t \geq 0}$ be a two-type annihilating particle system on a generously transitive graph~$G$.
Suppose that the initial condition $\xi_0\in\Z^{G}$ is an i.i.d.~field whose marginal~$\nu$ on~$\Z$ is non-degenerate and has finite first moment.
Suppose that the jump rates are $0 \leq D_B \leq D_A = 1$ and that the jump distribution~$p(\cdot,\cdot)$ is reflectable.
Then there exists a universal $c>0$ such that
\[
\rho_t \geq \frac{c }{t}
\]
for all large enough $t$.
Moreover, unless $D_B = 0$ and $\mub_0>\mua_0$,
\[
\Pb\left[ \big. \xi_t(\oo) \text{ eventually constant} \right] = 0
,
\]
i.e., the system $\xi$ is site recurrent.
\end{theorem}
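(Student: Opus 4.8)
The plan is to reduce Theorem~\ref{thm:generalgraphs} to the arguments already developed for $\Z^d$, by checking that every tool used in Sections~\ref{sec:samerates} and~\ref{sec:danedb} survives the passage to a reflective graph. First I would verify that the graphical construction of Section~\ref{sec:construction}, together with Lemma~\ref{lem:existence}, Lemma~\ref{lem:mtp}, and Lemma~\ref{lem:monotononicity}, goes through on~$G$: the only properties of~$\Z^d$ that were used are that it carries a transitive unimodular group of automorphisms under which $p(\cdot,\cdot)$ is diagonally invariant, and a reflective graph satisfies this since $\Gamma_p$ is transitive (reflectivity gives, for each $x$, a $\pi$ with $\pi\oo=x$) and unimodular (reflective graphs are unimodular, as noted in the text). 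Likewise the $0$-$1$ law, Lemma~\ref{lem:zeroone}, and the ergodicity statement, Lemma~\ref{lem:hittinganergodicset}, are stated or proved in the Appendix at a level of generality that covers reflective $p$; I would simply invoke them.

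The heart of the matter is the ``switch trick'' used in~\eqref{eq:switchtricktwopaths}, \eqref{eq:switchtrickonepath}, and \eqref{eq:differenceofwalks}, namely the identity $\Pb[Y^{z-y}\meet\oo] = \Pb[Y^\oo\meet y-z]$ and $\sum_y\Pb[X^\oo\meet y] = \E[\#\text{ sites visited}]$. On $\Z^d$ this rested on translation invariance plus the symmetry $p(\oo,z)=p(\oo,-z)$. On a reflective graph the correct replacement is: for the reflection $\pi\in\Gamma_p$ exchanging $x$ and $y$, the walk $X^\oo$ started at $\oo$ hits $y$ iff a walk started at $y$ hits $\oo$, because $\pi$ maps one trajectory law to the other and $\pi y=\oo$. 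More precisely, for any two sites $a,b$, reflectivity plus diagonal invariance of~$p$ give $\Pb[X^a\meet b]=\Pb[X^b\meet a]$, and transitivity lets us rewrite $\Pb[X^\oo\meet b]=\Pb[X^{\pi\oo}\meet \pi b]$ for the reflection $\pi$ with $\pi\oo=b$, which is the input needed so that $\sum_b\Pb[X^\oo\meet b]$ equals the expected range, and so that the entangled-tracer difference $Y^{yz}=X^{z,1}-X^{y,1}$ — replaced here by a single walk $Z^{yz}$ of jump rate~$2$ started at a point that reflection sends to $\oo$ — can be handled exactly as in Section~\ref{sec:danedb}. I would state one short lemma encapsulating $\Pb[X^a\meet b]=\Pb[X^b\meet a]$ and $\sum_b\Pb[X^\oo\meet b]=\E[R_t]$ on reflective graphs, prove it from the definition of $\Gamma_p$, and then quote it wherever the old proof quoted translation-plus-symmetry.

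With that lemma in hand, the density bound $\rho_t\ge c/t$ follows verbatim: the coupling of initial conditions (Lemma~\ref{lem:couplingic} and its general-$\nu$ version), the tracer constructions of Sections~\ref{sec:samerates} and~\ref{sec:danedb} including the entangled construction under $\Pb^{yz}$, the inequalities $[X^{y,j}_-\meet w]\subseteq[X^{y,j}\meet w]\subseteq[X^{y,j}_+\meet w]$, the ``good path'' device, and the elimination of the $K^{-1}$ factor by the $\xi^m_t(\oo)-\xi_t(\oo)\ge k$ argument, all use only the abstract switch identity and the linear-range bound $\E[R_t]\le Ct$ (valid on any graph of finite degree with jump rate~$1$, hence~$2$). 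For site recurrence in the case $D_B>0$: if the walk driven by $p$ is transient on $G$ then $R_t/t\to\gamma>0$ in $L^1$ as before — here ``transient'' means $\gamma=\Pb[X^\oo\text{ never returns}]>0$, which by the assumption that the sets $\C_x$ in~\eqref{eq:cx} are infinite is a genuine dichotomy — and the estimate $\Pb(\BB_t)\ge\epsilon$ goes through; if the walk is recurrent, the single-site re-sampling coupling of Section~\ref{ss:siterecurrencezd} gives a recurrent tracer and the $0$-$1$ law finishes. For $D_B=0$ and $\mua_0\ge\mub_0$, Propositions~\ref{prop:originnevervisited} and~\ref{prop:annihilationofAparticles} are already phrased for general $\Z^d$ but use only monotonicity (Lemma~\ref{lem:monotononicity}), translation covariance, ergodicity of translation-covariant factors of i.i.d.\ fields, and Lemma~\ref{lem:hittinganergodicset}; reflective graphs have a transitive automorphism group, so ergodicity under the group action holds, and I would restate those two propositions with $\Z^d$ replaced by $G$ and $\Gamma_p$-translations in place of lattice translations. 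The main obstacle I anticipate is purely bookkeeping: making sure that in the entangled construction the reference point $z-y$ is replaced by the $\Gamma_p$-image of one endpoint under the reflection swapping $y$ and $z$, so that ``$Y^{yz}\meet\oo$'' still means what it should and the range bound still applies; once the abstract switch lemma is isolated, no genuinely new probabilistic input is required.
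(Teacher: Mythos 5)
Your overall strategy is the same as the paper's: reduce to Sections~\ref{sec:samerates} and~\ref{sec:danedb}, isolate a ``switch lemma'' $\Pb[X^a\meet b]=\Pb[X^b\meet a]$ from reflectivity to replace translation-plus-symmetry in \eqref{eq:switchtricktwopaths} and \eqref{eq:switchtrickonepath}, and transport the $0$-$1$ law, the fixed-obstacle argument, and Lemma~\ref{lem:hittinganergodicset} via $\Gamma_p$-covariance and ergodicity. All of that is correct and matches the paper.

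The gap is in the step you dismiss as ``purely bookkeeping'': the entangled pair of tracers. On $\Z^d$ the construction of Section~\ref{sec:danedb} works because $X^{z,1}$ executes the next increment of $Z^{yz}$ while $X^{y,1}$ executes the \emph{opposite} increment, so that $Y^{yz}=X^{z,1}-X^{y,1}$ is itself a $p$-walk and $[X^{z,1}\meet X^{y,1}]=[Y^{yz}\meet\oo]$. On a general reflective graph neither ``the opposite of a jump'' nor the difference $X^{z,1}-X^{y,1}$ has any meaning, and even on a non-abelian Cayley graph the difference of two $p$-walks is not a $p$-walk; changing only the starting point of a single rate-$2$ walk ``that reflection sends to $\oo$'' does not define a joint law for the two tracers, and in particular gives no identity relating the meeting event of the pair to the range of one walk. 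What is actually needed -- and is the main new content of the paper's Section~\ref{sec:generalgraphs} -- is a construction of a coupled pair $(W^y_n,W^z_n)$ from a single discrete path $Z$ and a dynamically generated switching sequence $\ell\in\{1,2\}^\N$, in which each time the roles of mover and rester swap one applies a fresh reflection $\pi^{Z^n_n,z_n}\in\Gamma_p$ to the entire unexplored portion of $Z$; this is what guarantees both that $d(W^y_n,W^z_n)=d(Z_n,z)$ (so $[X^{z,1}\meet X^{y,1}]=[Y^{yz}\meet z]$ and the expected-range bound applies) and that each tracer separately has the correct conditional transition law $p(W^\cdot_n,\cdot)$. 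Verifying the second property requires the case analysis on consecutive values of $\ell$, i.e.\ it is a genuine probabilistic construction with something to prove, not a relabelling. Without it your estimate \eqref{eq:differenceofwalks}, hence both the density bound and the site-recurrence bound, has no proof on a reflective graph that is not $\Z^d$.
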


We will not present a self-contained proof.
Assuming that the reader has gone through the previous sections, we will focus on the parts of the proofs where the structure of~$\Z^d$ was used, and replace them accordingly.
The most delicate part, which we will do in detail, is the construction of a pair of entangled tracers, and verifying conditions for these tracers to meet in finite time.

By assumption, the distribution of~$(\xi_0,S,h)$ is $\Gamma_p$-invariant.
Moreover, the construction of the system from $(\xi_0,S,h)$ is $\Gamma_p$-covariant.
Since~$\Gamma_p$ makes~$G$ generously transitive, it is a unimodular group, and thus proofs based on mass-transport principle remain valid.
Since by assumption $\C_x$ is infinite, proofs based on ergodicity remain valid as well.
In particular, Lemmas~\ref{lem:existence}--\ref{lem:hittinganergodicset} hold in this setting.
See Appendix~\ref{sec:appendix}.

The coupling $(\xi,\xi^m)$ described in Sections~\ref{sec:samerates} and~\ref{sec:danedb} can be defined in the present setting.
Again, the difference between $\xi$ and $\xi^m$ is given by a family of tracers $(X^{x,j})_{x\in G, j=1,\dots,K}$, and relation~(\ref{eq:diffmanytracers}) holds.

In the proofs of Theorems~\ref{thm:zd},~\ref{thm:bl}, and~\ref{thm:DAneDB}, there are a few passages where a sum over~$\Z^d$ is rewritten, such as~(\ref{eq:switchtricktwopaths}) and~(\ref{eq:switchtrickonepath}).
The desired identity follows from re-indexing the sum or, alternatively, by keeping the same indexes and using invariance under reflections.
In these places we can keep the indexes and consider, for each term in the sum, an automorphism $\pi \in \Gamma_p$ which swaps $\oo$ for $y$, $z$, or $z-y$.

The weak law of large numbers for the range of the walk used to obtain~(\ref{eq:visitpositive})
holds for transient random walks on generously transitive graphs whose transition kernel is reflectable.
This follows from the argument presented in~\cite[\S 6.2.1]{Hughes95}.

\bigskip

The step that has no immediate analogue is
$[X^y\meet X^z] = [X^z-X^y\meet\oo]$,
used in~(\ref{eq:differenceoftracerequalrates}) and~(\ref{eq:differenceofwalks}), together with the fact that $X^z-X^y$ is a process that jumps according to  $p(\cdot,\cdot)$.
For a general graph~$G$ the subtraction $X^z-X^y$ is not even defined.

We want a representation of $X^y$ and $X^z$ that provides a treatable characterization of the event $[X^z\meet X^y]$.
The construction below provides a process $Y^{yz}=(Y^{yz}_t)_{t \geq 0}$ which jumps according to $p(\cdot,\cdot)$, and with the property that $d(X^z_t,X^y_t)=d(Y^{yz}_t,z)$.
In particular, $[X^z\meet X^y] = [Y^{yz}\meet z]$.

The main step is to find a coupling at the discrete-time level.
Let $Z=(Z_n)_{n\in\N_0}$ be a discrete-time random walk on $G$ starting at $Z_0=y$ with transition kernel $p(\cdot,\cdot)$.
Let~$\ell=(\ell_1,\ell_2,\ell_3,\dots)\in\{1,2\}^\N$ and take $\ell_0=1$.

We will construct a pair of processes $(W^y_n)_{n\in\N_0}$ and $(W^z_n)_{n\in\N_0}$ with $W^y_0=y$ and $W^z_0=z$, satisfying
the following properties.
First, $d(W^y_n,W_n^z)=d(Z^y_n,z)$ for all ${n\in\N_0}$.
Second,
the conditional distribution of $(W^y_{n+1}, W^z_{n+1})$ given $(W^y_i,W^z_i)_{i\leq n}$ and $(\ell_i)_{i \leq n+1}$ is given by $p(W^y_n,\cdot) \otimes \delta_{W^z_n}$ if $\ell_{n+1}=1$ and $\delta_{W^y_n} \otimes p(W^z_n,\cdot)$ if $\ell_{n+1}=2$.
The role of $\ell_n$ here is to indicate which of the walks is going to jump.

Let us describe the construction.
For each $x,w\in G$, fix some $\pi^{x,w}\in\Gamma_p$ such that $\pi^{x,w} x = w$ and $\pi^{x,w} w = x$.
At step~$n=0$ we take
\[
Z^0=Z
, \quad
z_0=z
, \quad
W^y_0=Z^0_0=y
, \quad
W^z_0=z_0=z
.
\]
For $n \in \N$, take
\[
\pi^n =
\begin{cases}
\mathrm{Id}, & \ell_n=\ell_{n-1}, \\
\pi^{Z^{n-1}_{n-1},z_{n-1}}, & \ell_{n} \ne \ell_{n-1}.
\end{cases}
\]
Take
\[
Z^n = \pi^n Z^{n-1}
\quad \text{ and } \quad
z_n = \pi^n z_{n-1}
\]
and
\[
W^y_n =
\begin{cases}
Z^n_n ,& \ell_n=1, \\
z_n ,& \ell_n=2,
\end{cases}
\qquad
\qquad
W^z_n =
\begin{cases}
z_n ,& \ell_n=1, \\
Z^n_n ,& \ell_n=2.
\end{cases}
\]

The first property is immediate.
Indeed, writing $\pi^{n!}=\pi^n\cdots\pi^1$, we have
\[
d(W^y_n,W^z_n) = d(Z^n_n,z_n) = d( \pi^{n!} Z^0_n,\pi^{n!} z_0) = d(Z_n,z).
\]

For the second property, assume that $\ell_{n+1}=1$.
The case $\ell_{n+1}=2$ is analogous.

If $\ell_n=1$, it means that~$\pi^{n+1}$ is the identity and $W^z_{n+1}=z_{n+1}=z_n=W^z_{n}$.
Moreover, $W^y_n=Z^n_n$, and $W^y_{n+1}=Z^{n+1}_{n+1}=Z^{n}_{n+1}$.
Now $Z^n=\pi^{n!} Z$, and the conditional distribution of $Z_{n+1}$ given $Z_1,\dots,Z_n$, is $p(Z_n,\cdot)$.
Since $\pi^1,\dots,\pi^{n}\in\Gamma_p$, the conditional distribution of $Z^{n}_{n+1}=\pi^{n!}Z_{n+1}$ given $Z_1,\dots,Z_n$ and $\ell_1,\dots,\ell_n$ is given by $p(\pi^{n!}Z_{n},\cdot)$, which in turn equals $p(W^y_{n},\cdot)$.

If $\ell_n=2$, it means that $\pi^{n+1}=\pi^{Z^n_n,z_n}$ and $W^z_{n+1}=z_{n+1}=\pi^{Z^n_n,z_n}z_n=Z^n_n=W^z_n$.
Moreover,
$W^y_n=z_n=\pi^{Z^n_n,z_n}Z^{n}_{n}=\pi^{Z^n_n,z_n}\pi^{n!}Z_{n}$
and
$W^y_{n+1}=Z^{n+1}_{n+1}=\pi^{Z^n_n,z_n}Z^{n}_{n+1}=\pi^{Z^n_n,z_n}\pi^{n!}Z_{n+1}$.
As in the previous case, the conditional distribution of $Z_{n+1}$ given $Z_1,\dots,Z_n$ is $p(Z_n,\cdot)$.
Again, $\pi^{Z^n_n,z_n}\pi^{n!} \in \Gamma_p$, and thus
the conditional distribution of $Z^{n}_{n+1}=\pi^{Z^n_n,z_n}\pi^{n!}Z_{n+1}$ given $Z_1,\dots,Z_n$ and $\ell_1,\dots,\ell_n$ is given by $p(\pi^{Z^n_n,z_n}\pi^{n!}Z_{n},\cdot)$, which in turn equals $p(W^y_{n},\cdot)$.

We finally describe the continuous-time construction using the above one.
This is the last missing step for Theorem~\ref{thm:generalgraphs} to be proved along the same lines as Theorem~\ref{thm:DAneDB}.

Let $y$ and $z$ be fixed.
Sample a quintuple $\Y^{yz} = (Z,\T^A_y,\T^B_y,\T^A_z,\T^B_z)$, where $Z=(Z_n)_{n\in\N_0}$ is a random walk starting at~$Z_0=y$ and jumping according to $p(\cdot,\cdot)$, and the clocks are given as in Section~\ref{sec:danedb}.
As before, the entangled tracers~$X^{y,1}$ and $X^{z,1}$ will be constructed from this quintuple.

The sequences~$(\ell_n)_{n\in\N}$, $(W^y_n)_{n\in\N}$, and $(W^z_n)_{n\in\N}$ will be defined dynamically.
Starting with $n=0$, define $\ell_0$, $W^y_0$, and $W^z_0$ as above.
Let the tracers $X^{y,1}$ and $X^{z,1}$ start at positions~$y$ and~$z$ and listen to the appropriate clock, as in Section~\ref{sec:danedb}.
When one of these tracers is supposed to jump due to a clock ring, we increment the value of~$n$, and take~$\ell_n$ as~$1$ or~$2$ depending on whether~$X^{y,1}$ or~$X^{z,1}$ is going to jump.
Knowing the value of~$\ell_n$ we can define $W^y_n$ and $W^z_n$, which will be the new positions of~$X^{y,1}$ and~$X^{z,1}$.
Carrying this procedure indefinitely, we obtain a sequence~$\ell\in\{1,2\}^{\N}$

As in Section~\ref{sec:danedb}, we define the path~$Y^{yz}_+ = \big( Y^{yz}_+(t) \big)_{t \geq 0}$ by following the discrete path~$Z^{yz}$ and jumping on $\T^A_{y} \cup \T^A_{z}$, that is, it jumps when any of the clocks ring.
So again we have
\[ \left[ Y^{yz} \meet w \right] \subseteq \left[ Y^{yz}_+ \meet w \right]. \]
This finishes the construction of the entangled tracers.

\appendix
\section{Postponed proofs}
\label{sec:appendix}

As mentioned at the beginning of Section~\ref{sec:construction}, the construction
was described for~$\Z^d$
with $p(x,x+y)=p(\oo,y)$, but works for any graph~$G$ having a transitive unimodular group of automorphisms~$\Gamma$ such that $p(x,y)=p(\pi x,\pi y)$ for any $\pi\in\Gamma$ and $x,y\in G$.
We present the proofs for this setting.

During this appendix we will make use of the \emph{mass transport principle}, which we now briefly recall, referring to~\cite[\S 8]{lyons-peres-} for details.
We say that $f:G\times G\to \R$ is \emph{diagonally invariant under $\Gamma$} if
\(f(x,y)=f(\pi x,\pi y)\) for all $x\in G$ and $\pi \in \Gamma$.
Under our assumption that $\Gamma$ is unimodular, we can apply~\cite[Corollary~8.8]{lyons-peres-} which says that 
\begin{equation}
\label{eq:masstransport}
\sum_{y\in G} f(x,y)=\sum_{y\in G} f(y,x)
\end{equation}
 for all $x\in G$.
Later on we will assume that $\Gamma_p$ makes $G$ generously transitive, which implies that it is unimodular so

\begin{lemma*}
[Lemma~\ref{lem:existence}\ restated]
Under the above assumptions, the construction described before Lemma~\ref{lem:existence} is well-defined and is $\Gamma_p$-covariant.
\end{lemma*}

\begin{proof}
Let $\B(w,n):=\{y\in G: d(w,y)\leq n\}$.
Take ${\xi}^{n}_0=\xi_0\cdot\I_{\B(\oo,n)}$, that is, the initial condition $\xi_0$ with all particles outside $\B(\oo,n)$ deleted.
Consider the truncated system given by $({\xi}^{n}_0,S,h)$.
The truncated system is well-defined, since it contains finitely many particles.

Define $\T_{y,j}^n$ as the time of annihilation of the particle $(y,j)$ in this system.
We set $\T_{y,j}^n=0$ if the particle $(y,j)$ is initially absent and $\T_{y,j}^n=\infty$ if the particle survives forever.
Notice that the history of particle $(y,j)$ can be reconstructed from~$S^{y,j}$ and~$\T_{y,j}^n$.
We will show that, almost surely, $\T_{y,j}^n=\T_{y,j}^m$ for all $m$ and $n$ large enough.
As a consequence, the construction of the full system can be defined as the limit of truncated systems as $n\to\infty$.

Recall from Section~\ref{sec:construction} that the difference between two systems which share the same instructions but have different initial conditions can be followed by a set of \emph{tracers}.
Let $(X^{n,x,i})_{x,i}$ denote the set of tracers that keep track of the differences between $\xi^{n}$ and $\xi^{n+1}$,
where~$x$ ranges over~$G$ and~$i$ ranges over $\{1,\dots,|\xi^{n+1}_0(x)-\xi^n_0(x)|\}$.
Denote by~$R_T(X^{n,x,i})$ the set of sites visited by $X^{n,x,i}_t$ during $t\in[0,T]$.
Now notice that $\T^n_{y,j}\wedge T$ may differ from $\T^{n+1}_{y,j}\wedge T$ only if some of these tracers intersects $S^{y,j}$ before time $T$.
Therefore we have, for any $L>0$,
\begin{multline}
\label{eq:upperboundTdiffer}
\Pb \big[\T^n_{y,j} \wedge T \ne \T^m_{y,j}\wedge T \text{ for infinitely many } (m,n) \big]
\leq
\Pb[R_T(S^{y,j})\not\subseteq \B(\oo,L)]
\,+
\\
+
\Pb \big[\text{infinitely many tracers } X^{n,x,i} \text{ visit } \B(\oo,L) \text{ by time $T$\,}\big].
\end{multline}

Let $X^x$ denote a random walk of jump rate~$D_A$ starting at $x$ and jumping according to $p(\cdot,\cdot)$ and $R_T(X^x)$ its range up to time $T$.
For all $L\in\N$
\begin{align}
\nonumber
\sum_{n\in\N}\Pb\big[\exists x & \in\partial^{\scriptscriptstyle+}\B(\oo,n),i\leq|\xi_0(x)|: R_T(X^{n,x,i})\cap \B(\oo,L)\neq \emptyset\big]\\
\nonumber
&\leq \sum_{n\in\N}\sum_{x\in\partial^{\scriptscriptstyle+}\B(\oo,n)} \sum_{i\geq 1} \nonumber
\Pb\big[ |\xi_0(x)|\geq i\big]\Pb\big[R_T(X^{x})\cap \B(\oo,L)\neq \emptyset \big]\\
\nonumber
&=\sum_{x\in G}\E\big[|\xi_0(x)|\big] \, \Pb[R_T(X^{\oo})\cap \B(x,L)\neq \emptyset]\\
\label{eq:sumoverg}
&\leq\sup_{x\in G}\E\big[|\xi_0(x)|\big] \, \sum_{x\in G} \, \Pb[R_T(X^{\oo})\cap \B(x,L)\neq \emptyset]\\
\nonumber
&\leq\sup_{x\in G}\E\big[|\xi_0(x)|\big] \, \E[\#\{y\in G: d(R_T(X^\oo),y)\leq L\}]\\
\nonumber
&\leq\sup_{x\in G}\E\big[|\xi_0(x)|\big] \, |\B(\oo,L)| \ \E\big[|R_T(X^\oo)|\big]\\
\nonumber
&\leq\sup_{x\in G}\E\big[|\xi_0(x)|\big] \, |\B(\oo,L)| \ D_A T<\infty.
\end{align}
Hence, by virtue of the Borel-Cantelli lemma the last term in~(\ref{eq:upperboundTdiffer}) is zero.
On the other hand, $\Pb[R_t(S^{y,j})\not\subseteq B(\oo,L)]\to0$ as $L\to\infty$.
Therefore,
\[\Pb[\T^n_{y,j}\wedge T = \T^m_{y,j}\wedge T \text{ for } m \text{ and } n \text{ large enough}]=1.\]

We have shown that the full system can be defined as the limit of truncated systems~$(\xi^n_0,S,h)$ having null initial condition outside $\B(\oo,n)$.

We need to show that this construction is $\Gamma_p$-covariant, where $\Gamma_p$ is the group of automorphisms defined in the begging of Section~\ref{sec:generalgraphs}.
We will show that the limit is the same if instead we take truncations on~$\B(w,n)$, for any fixed~$w\in G$.
Let $w\in G$ be fixed.
Take $\tilde{\xi}^{n}_0=\xi_0\cdot\I_{\B(w,n)}$, that is, the initial condition $\xi_0$ with all particles outside $\B(w,n)$ deleted.
Consider the truncated system given by $(\tilde{\xi}^{n}_0,S,h)$.
By the above argument, the limit of these truncated systems is well-defined.

Let $y \in G$ and $j\in\Z^*$.
We want to show that $\Pb[\tilde{\T}_{y,j} = \T_{y,j}] = 1$.
It is enough to show that $\Pb[\T^n_{y,j} \wedge T \neq \tilde{\T}^n_{y,j} \wedge T] \to 0$ as $n\to\infty$, for any fixed $T>0$.

Let $({X}^{n,x,i})_{x,i}$ be the set of tracers which keep track of the differences between $\xi^n$ and $\tilde{\xi}^n$, where $x$ ranges over $\B(\oo,n)\triangle \B(w,n)$ and $1 \leq i \leq|\xi_0(x)|$.
As before, $\T^n_{y,j}\wedge T$ may differ from $\tilde{\T}^n_{y,j} \wedge T$ only if one of such tracers intersects $S^{y,j}$ before time~$T$.
Therefore, for any $L>0$,
\begin{multline}
\Pb[\T^n_{y,j}\wedge T\neq \tilde{\T}^n_{y,j}\wedge T]
\leq
\\
\leq
\Pb[R_{T}(S^{y,j})\not\subseteq \B(\oo,L)]
+
\sum_{x}
\E\big[|\xi_0(x)| \big]
\Pb[R_{T}({X}^{x})\cap \B(\oo,L)\neq\emptyset]
,
\nonumber
\end{multline}
where the sum is over $x\in \B(\oo,n)\triangle \B(w,n)$.
As in the previous argument, the last term is bounded by
\begin{multline*}
\sup_{x\in G} \E\big[|\xi_0(x)| \big]
\sum_{\qquad\mathclap{\scriptscriptstyle \B(\oo,n)\triangle \B(w,n)}\quad}
\Pb[R_{T}({X}^{\oo})\cap \B(x,L)\neq\emptyset]
\leq
\\
\leq
\sup_{x\in G} \E\big[|\xi_0(x)| \big]
\sum_{\qquad\mathclap{\scriptscriptstyle\B(\oo,n-d(\oo,w))^c}\quad}
\Pb[R_{T}({X}^{\oo})\cap \B(x,L)\neq\emptyset]
.
\end{multline*}
Since~(\ref{eq:sumoverg}) is finite and
$\B(\oo,n-d(\oo,w))^c {\to}\emptyset$, the above quantity vanishes as $n\to\infty$.
Finally,
\[
 \limsup_n
\Pb[\T^n_{y,j}\wedge T\neq \tilde{\T}^n_{y,j}\wedge T]
\leq
\Pb[R_{T}(S^{y,j})\not\subseteq \B(\oo,L)]
\to
0
\text{ as }
L \to \infty,
\]
finishing the proof.
\end{proof}

\begin{proof}
[Proof of Lemma~\ref{lem:mtp}]
Let 
\(f_t(x,y)=\sum_{i,j\in\N,z\in G} \Pb[{M(x,i,z,-j,y,t)}] \)
denote the expected number of particles of type $A$ which started at $x$ and which have been annihilated at $y$ up to time $t$. 
Then
\(\sum_{x\in G} f_t(x,\oo)\)
equals the expected number of particles of type $A$ which have been annihilated at $\oo$ up to time $t$. This, together with the fact that each annihilation at $y$ involves one particle of type $A$, yields
\begin{equation}
\label{eq:quica}
\sum_{x\in G} f_t(x,\oo)=\Theta_t.
\end{equation} 
On the other hand,
\(\sum_{y\in G} f_t(x,y)\)
equals the expected number of particles of type $A$ started at $x$ which have been annihilated up to time $t$, and therefore
\begin{equation}
\label{eq:popeye}
\mu_0^A-\mu_t^A=\sum_{y\in G} f_t(\oo,y).
\end{equation}
Since the jump distribution is invariant under $\Gamma_p$, $f_t$ is diagonally invariant,
we can apply~(\ref{eq:masstransport}) to~(\ref{eq:quica}) and~(\ref{eq:popeye}) to obtain
\(
-\Theta_t=\mu^A_t-\mu^A_0.
\)
An analogous reasoning gives that
\(
-\Theta_t=\mu^B_t-\mu^B_0.
\)
This proves the first part of the lemma.

To prove the second claim, let
$\tilde{V}(x,j,y,t)$ denote the event that particle $(x,j)$ is present at site $y$ at time $t$.
Let $g_t(x,y)=\sum_{j\in\N}\Pb[\tilde{V}(x,j,y,t)]$ denote the expected number of $A$-particles which started at $x$ and are present at $y$ at time $t$.
Then
\(\mu^A_t=\sum_{x\in G} g_t(x,\oo)\)
and
\(\sum_{j\in\N}\Pb[V(\oo,j,t)]=\sum_{y\in G} g_t(\oo,y).\)
By arguments analogous to those above, $g_t$ is diagonally invariant under $\Gamma_p$, and~(\ref{eq:masstransport}) yields the second part of the lemma.
\end{proof}

We do not restate Lemma~\ref{lem:monotononicity}, it suffices to replace $\Z^d$ by 
$G$.

\begin{proof}
[Proof of Lemma~\ref{lem:monotononicity}]
For systems with finitely many particles, these monotonicity properties are obvious.
On the other hand, from the proof of Lemma~\ref{lem:existence}, the real system~$\xi$, as well as the variables~$\T_{x,j}$, can be approximated by large finite systems.
The lemma follows from these two observations.
\end{proof}



For the proof of Lemma~\ref{lem:zeroone},
recall the definition of~$\C_x$ from~(\ref{eq:cx}).
Since we are now assuming that $p(\cdot,\cdot)$ is reflectable, these sets split $G$ into equivalence classes.
Moreover, $\pi \C_x = \C_{\pi x}$ for any $\pi\in\Gamma_p$ and $x\in G$.
Take
\[ \Gamma'=\{ \pi\in\Gamma_p : \pi\oo \in \C_\oo\}. \]
Since the orbit of $\oo$ under $\Gamma'$ is $\C_\oo$, which we are assuming to be infinite, and the graphical construction is a $\Gamma'$-covariant function of $(\xi_0,S,h)$, whose distribution is a $\Gamma'$-invariant product measure, it follows that the construction is $\Gamma'$-ergodic.
See section ``Tolerance and Ergodicity'' in~\cite{lyons-peres-}.

\begin{proof}
[Proof of Lemma~\ref{lem:zeroone}]
It suffices to show that $\CC_\oo$ has probability~$0$ or~$1$, where $\CC_x$ denotes the event $[\text{site $x$ is visited by $A$-particles infinitely many times}]$.
The same proof works for $B$-particles in case $D_B>0$.

Suppose that the random walk with transition kernel $p(\cdot,\cdot)$ is transient.
For finite sets $\emptyset = \B_0 \subseteq \B_1 \subseteq \cdots$ with $\B_n \uparrow G$, let $\xi^n_0 = \xi_0 \cdot \I_{\B_n^c}$.
Write $\CC_\oo^n$ for the occurrence of~$\CC_\oo$ in the system given by~$(\xi_0^n,S,h)$, so that $\CC_\oo^0 = \CC_\oo$.
Notice that the system given by $(\xi^n_0,S,h)$ depends only on $(\xi_0(x),S^{x,\cdot},h^{x,\cdot})_{x\not\in \B_n}$, because the particles starting in~$\B_n$ are deleted.
Therefore $\limsup_n \CC_\oo^n$ is a tail event, and it suffices to show that
\[\Pb(\CC_\oo^n \triangle \CC_\oo^{n+1}) = 0 \text{ for all } n.\]

To see why this is true, we look at the difference between $\xi^n_0$ and $\xi^{n+1}_0$.
Similarly as in Sections~\ref{sec:construction},~\ref{sec:danedb}, and~\ref{sec:generalgraphs}, we consider a set of tracers $(X^{x,i})_{x,i}$,
where $x$ ranges over $G$ and $i$ ranges over $\{1,\dots,|\xi^{n+1}_0(x)-\xi^n_0(x)|\}$, and such tat
\[
\xi^{n+1}_t - \xi^{n}_t
=
\sum_{x} \sum_{i} \sgn[\xi^{n+1}_0(x) - \xi^{n}_0(x)] \cdot \I_{[X^{x,i} \text{ active at time }t]} \cdot \delta_{X^y_t}
.
\]
Now for the event $\CC_\oo^n \triangle \CC_\oo^{n+1}$ to hold, necessarily site~$\oo$ is visited by these tracers infinitely often.
But the tracers jump according the transition kernel $p(\cdot,\cdot)$, which we are assuming to be transient and, since there are finitely many such tracers, we deduce that $\Pb(\CC_\oo^n \triangle \CC_\oo^{n+1})=0$.

Now suppose that random walks are recurrent.
We claim that $\CC_\oo$ a.s.\ implies $\cap_{x\in\C_\oo}\CC_x$.
The converse implication is trivial.
Since the latter event is $\Gamma'$-invariant, it follows that its probability is either~$0$ or~$1$.

It remains to prove the claim.
Each time site~$\oo$ is visited by an $A$-particle, either that particle is annihilated at~$\oo$ or it jumps to a site chosen according to~$p(\oo,\cdot)$.
By Lemma~\ref{lem:mtp}, $\E[\mbox{number of annihilations at }\oo] \leq \mua_0 < \infty$.
Hence, infinitely many visits to $\oo$ almost surely imply infinitely many visits to any~$y$ such that $p(\oo,y)>0$.
By induction on~$n$, infinitely many visits to $\oo$ almost surely imply infinitely many visits to any~$y$ such that $p^n(\oo,y)>0$ for some $n\in\N_0$, proving the claim.
\end{proof}

\begin{lemma*}
[Lemma~\ref{lem:hittinganergodicset}\ restated]
If $p(\cdot,\cdot)$ is a reflectable transition kernel of~$G$, $\B\subseteq G$ is $\Gamma'$-ergodic and $X_n$ is a random walk which starts at~$\oo$ and jumps according to $p(\cdot,\cdot)$, independent of $\B$, then, almost surely, $X$ visits~$\B$ infinitely often.
\end{lemma*}

\begin{proof}
Write
\[
q_n = \sup_{k\in\N_0,w\in\B} p^k(X_n,w)
.
\]
Since the set~$\B$ is $\Gamma'$-ergodic with positive density, we have
\[ \Pb[q_0>0]=\Pb[\B\cap \C_\oo \ne \emptyset]=1. \]
Now notice that the sequence $(q_n)_{n\in\N}$ is stationary (because $X$~is independent of~$\B$), and thus $\Pb[q_n \to 0]=0$.
Hence, almost surely, there exists random $\delta>0$ such that $q_n \geq 2\delta$ infinitely often.
Therefore, for each time~$n$ when $q_n \geq 2\delta$, there is a random $k_n\in\N_0$ such that $\Pb \left[ X_{n+k}\in\B \,\big|\, X_1,\dots,X_n; \B \right] \geq \delta$.
The claim then follows from a conditional Borel-Cantelli lemma.
\end{proof}

\section*{Acknowledgement}

We thank E.~Andjel for fruitful discussions. M.C. and V.S thank MSRI for hospitality and support. This project was supported by grants Programa Iniciativa Científica Milenio grant number NC130062 through Nucleus Millennium Stochastic Models of Complex and Disordered Systems, PIP 11220130100521CO, PICT-2015-3154, PICT-2013-2137, PICT-2012-2744, Conicet-45955 and MinCyT-BR-13/14.

\renewcommand{\baselinestretch}{1}
\parskip 0pt
\small

\bibliographystyle{abbrv}
\bibliography{bib/leo}

\begin{thebibliography}{10}

\bibitem{adelman-76}
O.~Adelman.
\newblock Some use of some ``symmetries'' of some random process.
\newblock {\em Ann. Inst. H. Poincar{\'e} Sect. B (N.S.)}, 12:193--197, 1976.

\bibitem{andjel-82}
E.~D. Andjel.
\newblock Invariant measures for the zero range processes.
\newblock {\em Ann. Probab.}, 10:525--547, 1982.

\bibitem{arratia-83}
R.~Arratia.
\newblock Site recurrence for annihilating random walks on {${\bf Z}\sb{d}$}.
\newblock {\em Ann. Probab.}, 11:706--713, 1983.

\bibitem{balagurov-vaks-73}
B.~V. Balagurov and V.~G. Vaks.
\newblock Random walks of a particle on lattices with traps.
\newblock {\em Zh. Eksp. Teor. Fiz.}, 65:1939--1946, 1973.

\bibitem{BenjaminiFoxallGurel-GurevichJungeKesten16}
I.~Benjamini, E.~Foxall, O.~Gurel-Gurevich, M.~Junge, and H.~Kesten.
\newblock Itai benjamini, eric foxall, ori gurel-gurevich, matthew junge, and
  harry kesten.
\newblock {\em Electron. Commun. Probab.}, 21:47, 2016.

\bibitem{bramson-lebowitz-88}
M.~Bramson and J.~L. Lebowitz.
\newblock Asymptotic behavior of densities in diffusion-dominated annihilation
  reactions.
\newblock {\em Phys. Rev. Lett.}, 61:2397--2400, 1988.

\bibitem{bramson-lebowitz-90}
M.~Bramson and J.~L. Lebowitz.
\newblock Asymptotic behavior of densities in diffusion dominated two-particle
  reactions.
\newblock {\em Phys. A}, 168:88--94, 1990.

\bibitem{bramson-lebowitz-91}
M.~Bramson and J.~L. Lebowitz.
\newblock Asymptotic behavior of densities for two-particle annihilating random
  walks.
\newblock {\em J. Statist. Phys.}, 62:297--372, 1991.

\bibitem{bramson-lebowitz-91b}
M.~Bramson and J.~L. Lebowitz.
\newblock Spatial structure in diffusion-limited two-particle reactions.
\newblock {\em J. Statist. Phys.}, 65:941--951, 1991.

\bibitem{bramson-lebowitz-01}
M.~Bramson and J.~L. Lebowitz.
\newblock Spatial structure in low dimensions for diffusion limited
  two-particle reactions.
\newblock {\em Ann. Appl. Probab.}, 11:121--181, 2001.

\bibitem{cabezas-rolla-sidoravicius-14}
M.~Cabezas, L.~T. Rolla, and V.~Sidoravicius.
\newblock Non-equilibrium phase transitions: Activated random walks at
  criticality.
\newblock {\em J. Stat. Phys.}, 155:1112--1125, 2014.

\bibitem{dickman-rolla-sidoravicius-10}
R.~Dickman, L.~T. Rolla, and V.~Sidoravicius.
\newblock Activated random walkers: Facts, conjectures and challenges.
\newblock {\em J. Stat. Phys.}, 138:126--142, 2010.

\bibitem{erdos-ney-74}
P.~Erd\H{o}s and P.~Ney.
\newblock Some problems on random intervals and annihilating particles.
\newblock {\em Ann. Probability}, 2:828--839, 1974.

\bibitem{griffeath-78}
D.~Griffeath.
\newblock Annihilating and coalescing random walks on {${\bf Z}\sb{d}$}.
\newblock {\em Z. Wahrsch. Verw. Gebiete}, 46:55--65, 1978.

\bibitem{griffeath-79}
D.~Griffeath.
\newblock {\em Additive and cancellative interacting particle systems}, volume
  724 of {\em Lecture Notes in Mathematics}.
\newblock Springer, 1979.

\bibitem{holley-stroock-79}
R.~Holley and D.~Stroock.
\newblock Dual processes and their application to infinite interacting systems.
\newblock {\em Adv. in Math.}, 32:149--174, 1979.

\bibitem{holley-liggett-75}
R.~A. Holley and T.~M. Liggett.
\newblock Ergodic theorems for weakly interacting infinite systems and the
  voter model.
\newblock {\em Ann. Probability}, 3:643--663, 1975.

\bibitem{Hughes95}
B.~D. Hughes.
\newblock {\em Random walks and random environments. {V}ol. 1}.
\newblock Oxford Science Publications. The Clarendon Press, Oxford University
  Press, New York, 1995.
\newblock Random walks.

\bibitem{koritskii-etal-60}
A.~Koritskii, I.~Molin, V.~Shamshev, N.~Buben, and V.~Voevodskii.
\newblock An electronic paramagnetic resonance study of the radicals formed in
  fast electron irradiation of polyethylene.
\newblock {\em Polymer Science U.S.S.R.}, 1:458--472, 1960.

\bibitem{lootgieter-77}
J.-C. Lootgieter.
\newblock Probl{\`e}mes de r{\'e}currence concernant des mouvements
  al{\'e}atoires de particules sur {${\bf Z}$} avec destruction.
\newblock {\em Ann. Inst. H. Poincar{\'e} Sect. B (N.S.)}, 13:127--139, 1977.

\bibitem{lyons-peres-}
R.~Lyons and Y.~Peres.
\newblock Probability on trees and networks.
\newblock Book in preparation. Current version available at
  \url{http://mypage.iu.edu/\string~rdlyons}.

\bibitem{ovchinnikov-zeldovich-78}
A.~Ovchinnikov and Y.~Zeldovich.
\newblock Role of density fluctuations in bimolecular reaction kinetics.
\newblock {\em Chem. Phys.}, 28:215--218, 1978.

\bibitem{ovchinnikov-belyi-68}
A.~A. Ovchinnikov and A.~A. Belyi.
\newblock The kinetics of the destruction of radicals in polymers.
\newblock {\em Theor. Exp. Chem.}, 2:405--408, 1968.

\bibitem{rolla-sidoravicius-12}
L.~T. Rolla and V.~Sidoravicius.
\newblock Absorbing-state phase transition for driven-dissipative stochastic
  dynamics on {$Z$}.
\newblock {\em Invent. Math.}, 188:127--150, 2012.
\newblock \href{http://arxiv.org/abs/0908.1152}{arXiv:0908.1152}.

\bibitem{schwartz-78}
D.~Schwartz.
\newblock On hitting probabilities for an annihilating particle model.
\newblock {\em Ann. Probability}, 6:398--403, 1978.

\bibitem{toussaint-wilczek-83}
D.~Toussaint and F.~Wilczek.
\newblock Particle-antiparticle annihilation in diffusive motion.
\newblock {\em J. Chem. Phys.}, 78:2642, 1983.

\end{thebibliography}

\end{document}